\newtheorem{lemma}{Lemma}[section]
\newtheorem{theorem}[lemma]{Theorem}
\newtheorem{prop}[lemma]{Proposition}
\newtheorem{proposition}[lemma]{Proposition}
\newtheorem{cor}[lemma]{Corollary}
\newtheorem{corollary}[lemma]{Corollary}
\newtheorem{conjecture}[lemma]{Conjecture}
\newtheorem{claim*}{Claim}
\theoremstyle{definition}
\newtheorem{definition}[lemma]{Definition}
\newtheorem{remark}[lemma]{Remark}
\newtheorem{example}[lemma]{Example}
\let\a\alpha
\let\b\beta
\let\g\gamma
\let\f\phi
\let\l\lambda
\let\s\sigma
\let\t\tau
\let\G\Gamma
\newcommand{\bfe}{{\mathbf{e}}}
\newcommand{\bfF}{{\boldsymbol{F}}}
\newcommand{\bfk}{{\mathbf{k}}}
\newcommand{\bfm}{{\mathbf{m}}}
\newcommand{\bfx}{{\mathbf{x}}}
\newcommand{\bfzero}{\boldsymbol{0}}
\newcommand{\GG}{{\mathbb G}}
\newcommand{\PP}{{\mathbb P}}
\newcommand{\CC}{{\mathbb C}}
\newcommand{\ZZ}{{\mathbb Z}}
\newcommand{\calA}{{\mathcal A}}
\newcommand{\calI}{{\mathcal I}}
\newcommand{\calJ}{{\mathcal J}}
\newcommand{\calL}{{\mathcal L}}
\newcommand{\calM}{{\mathcal M}}
\newcommand{\calO}{{\mathcal O}}
\newcommand{\calP}{{\mathcal P}}
\newcommand{\calR}{{\mathcal R}}
\newcommand{\calS}{{\mathcal S}}
\newcommand{\calT}{{\mathcal T}}
\newcommand{\calV}{{\mathcal V}}
\newcommand{\OO}{{\mathcal O}}
\DeclareMathOperator{\sgn}{sgn}
\DeclareMathOperator{\rank}{rank}
\DeclareMathOperator{\id}{id}
\newcommand{\Relations}{\mathcal{R}}
\newcommand{\Linearspaces}{\mathcal{L}}
\newcommand{\Orbit}{\mathcal{O}}
\DeclareMathOperator{\superrank}{super-rank}
\numberwithin{equation}{section}
\numberwithin{table}{section}
\begin{document}

%%%%%%%%%%%%%%%  Topmatter %%%%%%%%%%%%%%%%%%

\title[Uniform bounds for dynamical Mordell--Lang]{%
On a uniform bound for the number of  exceptional linear subvarieties in the
dynamical Mordell--Lang conjecture
}
\date{\today}
\author{Joseph H. Silverman}
\address{Mathematics Department, Box 1917, Brown University, 
Providence, RI 02912 USA} 
\email{jhs@math.brown.edu} 
\author{Bianca Viray}
\address{Mathematics Department, Box 1917, Brown University, 
Providence, RI 02912 USA} 
\email{bviray@math.brown.edu} 
\subjclass[2010]{Primary: 37P15; Secondary: 11D45, 37P05}
\keywords{arithmetic dynamics, Mordell--Lang conjecture}

\thanks{The first author's research supported by NSF DMS-0854755.
The second author's research supported by NSF DMS-1002933.}

\begin{abstract}
Let $\f:\PP^n\to\PP^n$ be a morphism of degree~$d\ge2$ defined
over~$\CC$.  The dynamical Mordell--Lang conjecture says that the
intersection of an orbit $\Orbit_\f(P)$ and a subvariety
$X\subset\PP^n$ is usually finite. We consider the number of linear
subvarieties~$L\subset\PP^n$ such that the intersection
$\Orbit_\f(P)\cap L$ is ``larger than expected.'' When~$\f$ is the
$d^{\text{th}}$-power map and the coordinates of~$P$ are
multiplicatively independent, we prove that there are only finitely
many linear subvarieties that are ``super-spanned'' by~$\Orbit_\f(P)$,
and further that the number of such subvarieties is bounded by a
function of~$n$, independent of the point~$P$ and the degree~$d$.  More
generally, we show that there exists a finite subset $S$, whose
cardinality is bounded in terms of $n$, such that any $n+1$
points in $\Orbit_{\f}(P)\smallsetminus S$ are in linear general position
in $\PP^n$.
\end{abstract}

%% Abstract for the ArXiv 
%% Let F : P^n --> P^n be a morphism of degree d > 1 defined over C.
%% The dynamical Mordell--Lang conjecture says that the intersection
%% of an orbit O_F(P) and a subvariety X of P^n is usually finite. We
%% consider the number of linear subvarieties L in P^n such that the
%% intersection of O_F(P)and L is "larger than expected." When F is
%% the d'th-power map and the coordinates of P are multiplicatively
%% independent, we prove that there are only finitely many linear
%% subvarieties that are "super-spanned" by O_F(P), and further that
%% the number of such subvarieties is bounded by a function of n,
%% independent of the point P or the degree d.  More generally, we
%% show that there exists a finite subset S, whose cardinality is
%% bounded in terms of n, such that any n+1 points in O_F(P)\S are in
%% linear general position in P^n.

\maketitle

%%%%%%%%%%%%%%%%%%%%%%%%%%%%%%%%%%%%%%%%%%%%%%%%%%%%%%%%%%%%%%%%%%%%%%%%%%%%
\section{The Dynamical Mordell--Lang Conjecture} %%%%%%%%%%%%%%%%%%%%%%%%%%%
%%%%%%%%%%%%%%%%%%%%%%%%%%%%%%%%%%%%%%%%%%%%%%%%%%%%%%%%%%%%%%%%%%%%%%%%%%%%

The classical Mordell conjecture says that a curve~$C$ of genus
$g\ge2$ defined over a number field~$K$ has only finitely many
$K$-rational points.  One may view~$C$ as embedded in its
Jacobian~$J$, and then Mordell's conjecture may be reformulated as
saying that~$C$ intersects the finitely generated group~$J(K)$ in only
finitely many points.  Taking this viewpoint, Lang conjectured that
if~$\G\subset A$ is a finitely generated subgroup of an abelian
variety~$A$ and if $X\subset A$ is a subvariety of~$A$, then~$X\cap\G$ is
contained in a finite union of translates of proper abelian
subvarieties of~$A$. The Mordell--Lang conjecture for abelian
varieties was proven by Faltings~\cite{MR1109353,MR1307396}, building
on ideas pioneered by Vojta~\cite{MR1109352} in his alternative proof
of the original Mordell conjecture.
\par
The classical Mordell--Lang may be reformulated in dynamical terms as
follows. Let $P_1,\ldots,P_r$ be generators of~$\G$, and for
each~$1\le i\le r$, let $T_i:A\to A$ be the translation-by-$P_i$ map,
i.e., $T_i(Q)=Q+P_i$. Further let~$\calT$ be the group of self-maps
of~$A$ generated by~$T_1,\ldots,T_r$. Then~$\G$ is simply the complete
orbit of~$0$ by the group of maps~$\calT$, so the Mordell--Lang
conjecture is a statement about the intersection of an orbit and a
subvariety.
\par
The following is a dynamical analogue of the Mordell--Lang conjecture
for self-morphisms of algebraic varieties;
see~\cite{MR1259107,arxiv0805.1560}.

\begin{conjecture}[Dynamical Mordell--Lang Conjecture]
Let $\f:V\to V$ be a self-morphism of an algebraic variety defined
over~$\CC$, let $X\subset V$ be a subvariety, and let $P\in V(\CC)$.
Then
\[
  \bigl\{n\ge0 : \f^n(P)\in X\bigr\}
\]
is a finite union of arithmetic progressions \textup(where a single integer
is viewed as an arithmetic progression with common difference~$0$\textup).
\end{conjecture}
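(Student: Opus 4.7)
The plan is to follow the now-standard \emph{Skolem--Mahler--Lech} strategy, adapting to algebraic dynamics the classical treatment of zero sets of linear recurrences via $p$-adic analysis. First I would reduce to the arithmetic setting: since $V$, $X$, $\f$, and $P$ involve only finitely many coefficients, spread everything out to a finitely generated $\Z$-subalgebra $R\subset\CC$ and choose a closed point of $\Spec R$ with residue field $\kk$ of sufficiently large characteristic $p$, so that $V$ and $\f$ have good reduction there. Denote by $\bar P$ the reduction of $P$ and by $\pp$ the chosen maximal ideal.

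Since $V(\kk)$ is finite, the forward orbit $\{\bar\f^n(\bar P)\}$ is eventually periodic, so some iterate $\bar\f^m$ fixes $\bar P$. Replacing $\f$ by $\f^m$ and $P$ by a suitable translate along its orbit (which only amounts to partitioning each arithmetic progression according to the residue modulo $m$), I may assume $\bar\f(\bar P)=\bar P$, so that $\f$ preserves the $\pp$-adic formal neighborhood of $P$. The crucial analytic step is then to show that some further iterate $\f^{\ell}$ acts on a chosen system of local coordinates by convergent power series
\[
  g_i(\bfx) \;=\; x_i + p\cdot h_i(\bfx), \qquad h_i\in\OO\lideal\bfx\rideal,
\]
i.e.\ that its differential at $P$ is congruent to the identity modulo $p$. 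Granting this, a standard iteration and $p$-adic interpolation argument produces $p$-adic analytic functions $F_1,\dots,F_n$ on $\Z_p$ such that $F_i(k)$ equals the $i$-th coordinate of $\f^{k\ell}(P)$.

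From there the conclusion is routine. For each residue $r$ modulo $\ell$, pulling back local equations of $X$ along $k\mapsto \f^{k\ell+r}(P)$ yields finitely many $p$-adic analytic functions $G_{r,j}$ on $\Z_p$; the condition $\f^{k\ell+r}(P)\in X$ is equivalent to the simultaneous vanishing of the $G_{r,j}$. By \emph{Strassman's theorem}, each $G_{r,j}$ either has only finitely many zeros on $\Z_p$ or vanishes identically, so the return set for each $r$ is either finite or all of $\Z_{\ge 0}$. Taking the union over $r\bmod \ell$ gives the asserted finite union of arithmetic progressions.

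The principal obstacle is the analytic linearization step. When $\f$ is étale along $\Orbit_\f(P)$, the Jacobian $d\bar\f^m$ at $\bar P$ lies in $\mathrm{GL}_n(\kk)$, some further power therefore acts trivially on the tangent space modulo $\pp$, and a formal conjugation argument on the $\pp$-adic disk produces the power series $g_i$ above; this is in substance the argument of Bell--Ghioca--Tucker. When some point of the orbit is a critical point of $\f$, however, the formal disk is contracted, no iterate can be congruent to the identity modulo $p$, and the method breaks down. Bridging this gap in full generality is precisely where the conjecture remains open, which is presumably the reason the present paper restricts to power maps and linear subvarieties, where direct combinatorial arguments can substitute for the $p$-adic analysis.
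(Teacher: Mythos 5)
The statement you are asked about is the Dynamical Mordell--Lang \emph{Conjecture}: the paper states it as an open problem and offers no proof of it (nor does anyone else --- it remains open). So no proof attempt can be checked against ``the paper's own proof,'' and your text, to its credit, is not actually a proof: it is an accurate sketch of the Skolem--Mahler--Lech / $p$-adic arc strategy together with an honest admission that the crucial step fails in general. The gap you name is real and is exactly the known obstruction. To run the argument you must find a prime $\pp$ of good reduction and an iterate $\f^{\ell}$ whose action on the $\pp$-adic residue disk of $P$ is analytically conjugate to a map congruent to the identity, so that $k\mapsto\f^{k\ell}(P)$ interpolates to a $\Z_p$-analytic arc and Strassman's theorem applies. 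This linearization is available when $\f$ is \'etale along the orbit (Bell--Ghioca--Tucker), or in various special coordinate situations, but for a general degree-$d\ge2$ morphism of $\PP^n$ the reduced orbit may meet the critical locus at every prime of good reduction, the local differential can be nilpotent, and no iterate is $\pp$-adically close to the identity; no substitute for the analytic arc is known. Since that step is the entire content of the conjecture beyond the routine Strassman endgame, your proposal does not constitute a proof.

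Two smaller points for precision. First, after reducing mod $\pp$ you can only arrange $\bar\f^{m}(\bar Q)=\bar Q$ for a point $Q$ in the \emph{forward} orbit of $P$ (the residual orbit is eventually periodic, not necessarily periodic from the start), so one discards a finite initial segment and works with the tail; this is harmless but should be said. Second, when a $G_{r,j}$ vanishes identically on $\Z_p$ you conclude that the \emph{entire} residue class $r\bmod\ell$ lies in the return set, which is what produces the infinite arithmetic progressions; you state this correctly, but note it presupposes the interpolating arc exists, so it cannot be salvaged independently of the missing step. Finally, be aware that the theorem this paper actually proves (Theorem~\ref{theorem:mainthm}) is not a case of this conjecture at all: it bounds the number of exceptional \emph{linear subvarieties} meeting an orbit in super-spanning configurations, via $S$-unit equations rather than $p$-adic analysis, so the two results are logically independent.
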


There are currently only a few scattered results in the literature
related to the Mordell--Lang conjecture in the dynamical
setting. These include results on \'etale maps, an analogue for
Drinfeld modules, and results for maps of various special types, for
example diagonal maps
$\f(z_1,\ldots,z_n)=\bigl(f_1(z_1),\ldots,f_n(z_n)\bigr)$;
see~\cite{MR2766180,arXiv:0712.2344,arxiv0805.1560,arXiv:0704.1333,arxiv0705.1954}.
\par
Write 
\[
  \calO_\f(P)=\{\f^n(P):n\ge0\}
\]
for the forward orbit of~$P$.  The intersection $X\cap\calO_\f(P)$ may be
infinite if there is some positive-dimensional subvariety~$Y\subset X$
that is periodic, i.e., $\f^N(Y)=Y$, since then $\f^k(P)\in Y$ implies
that $\f^{k+iN}(P)\in Y$ for all $i\ge0$.  If there is no such
subvariety, then one generally expects the intersection
$X\cap\calO_{\f(P)}$ to be finite.
\par
We now turn the Mordell--Lang problem around and consider the set of
subvarieties~$X$ whose intersection with~$\Orbit_\f(P)$ is finite, but
``larger than one would expect.'' In this paper we restrict attention
to self-maps of~$\PP^n$ and linear subspaces~$X$, which still present
sufficiently many difficulties to merit study. For example,
if~$\f:\PP^2\to\PP^2$, then it seems plausible that there should be
only finitely many lines in~$\PP^2$ that contain three (or more)
points of the orbit~$\Orbit_\f(P)$.  More generally, one might expect
that there are only finitely many hyperplanes~$H$ in~$\PP^n$ that
contain $n+2$ points of the orbit~$\Orbit_\f(P)$, but this is not
quite true. The problem is that there might be some lower dimensional
linear space~$L\subset\PP^n$ that contains $n+1$ points
of~$\Orbit_\f(P)$, and then every hyperplane~$H$ containing~$L$ and
having non-empty intersection with $\Orbit_\f(P)\smallsetminus L$ will
\emph{a fortiori} contain $n+2$ points of~$\Orbit_\f(P)$. The solution
is to look only at hyperplanes~$H$ containing $n+2$
points~$Q_1,\ldots,Q_{n+2}$ of~$\Orbit_\f(P)$ such that~$H$ is spanned
by every subset of~$\{Q_1,\ldots,Q_{n+2}\}$ consisting of $n+1$
points.
\par
We will say that an $(r-1)$-dimensional linear space $L\subset\PP^n$
is \emph{super-spanned} by the set of points $\{Q_0,\ldots,Q_r\}$ if every 
subset consisting of~$r$ points spans~$L$. With this definition, we can state
our main conjecture.

\begin{conjecture}
\label{conjecture:finmanylinsp}
Let $\f:\PP^n\to\PP^n$ be a morphism of degree~$d\ge2$
defined over~$\CC$, and
let~$P\in\PP^n(\CC)$ be a point whose orbit~$\calO_\f(P)$ is Zariski
dense in~$\PP^n$. Let $r\ge1$. Then there are only finitely many
linear subspaces $L\subset\PP^n$ of dimension $r-1$ such that
$L\cap\calO_\f(P)$ contains a set of~$r+1$ points that
super-spans~$L$. Furthermore, the number of such linear subspaces can
be bounded by a function that depends only on $n$ and $d$.
\end{conjecture}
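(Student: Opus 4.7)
The plan is to convert the super-spanning condition into a system of non-degenerate vanishing determinants and then invoke a Diophantine finiteness result.

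First, I would reduce to the case where $\f$ and $P$ are defined over a number field $K$ by a standard specialization argument, legitimate because the locus of super-spanned $L$ is constructible. Second, fix $r+1$ iterates $Q_{k_0}=\f^{k_0}(P),\dots,Q_{k_r}=\f^{k_r}(P)$ and form the $(r+1)\times(n+1)$ matrix $M$ of their homogeneous coordinates. The condition that these points super-span an $(r-1)$-plane $L\subset\PP^n$ translates into two linear-algebraic requirements on $M$: (i) every $(r+1)\times(r+1)$ minor of $M$ vanishes, and (ii) every cofactor appearing in the Laplace expansion of each such minor is nonzero. Thus for each $(r+1)$-subset $T\subset\{0,\dots,n\}$ of coordinate indices one obtains a non-degenerate relation
\[
\sum_{\sigma\in\Sym(T)}\sgn(\sigma)\prod_{i=0}^{r}(Q_{k_i})_{\sigma(i)}=0,
\]
in which, by (ii), no proper subsum vanishes.

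For the $d^{\rm th}$-power map with $P=[a_0:\dots:a_n]$ and multiplicatively independent $a_j$'s, each coordinate $(Q_{k_i})_j$ equals $a_j^{d^{k_i}}$, so the relation above becomes a non-degenerate $S$-unit equation in the multiplicative group generated by the $a_j$'s. Evertse's bound on the number of non-degenerate solutions to a unit equation depends only on the number of terms—here $(r+1)!$—and not on $K$, $S$, or the specific unit group. From this one extracts that the tuples $(d^{k_0},\dots,d^{k_r})$, and hence the subspaces $L$ that arise, lie in a set whose cardinality depends only on~$n$, yielding uniformity in both $P$ and $d$.

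For a general morphism $\f$ the coordinates of $\f^k(P)$ are no longer monomials in fixed quantities, and this is the main obstacle. A natural strategy is to combine the canonical height relation $\hat h_\f(\f^k(P))=d^k\hat h_\f(P)$ with a Vojta-style height inequality for points on linear subvarieties in order to bound $\#(L\cap\Orbit_\f(P))$ for each candidate $L$, and then to bound the number of such $L$'s by studying the incidence variety of super-spanned configurations inside $(\PP^n)^{r+1}$ and applying dynamical Mordell--Lang-type input inductively in~$r$. The hardest point is producing estimates that are \emph{uniform in the degree $d$}: this appears to require a uniform version of the dynamical Mordell--Lang conjecture, which is not currently available and likely demands a genuinely new idea, perhaps a dynamical analogue of the Subspace Theorem that exploits the strong rigidity imposed by super-spanning.
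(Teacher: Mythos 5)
The statement you are proving is the paper's open conjecture; the authors themselves establish it only for the $d^{\text{th}}$-power map with multiplicatively independent coordinates, so your candid admission that the general case needs new input (uniform dynamical Mordell--Lang, etc.) matches the state of the art. The problem is with the part you do claim to prove. Your step (ii) --- that super-spanning forces every cofactor in the Laplace expansion to be nonzero and hence that ``no proper subsum vanishes'' in the $(r+1)!$-term determinant identity --- is false on two counts. First, super-spanning only says that each set of $r$ rows has rank $r$ as an $r\times(n+1)$ matrix, i.e.\ that \emph{some} $r\times r$ minor of it is nonzero, not that every cofactor of every $(r+1)\times(r+1)$ minor is nonzero. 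Second, and more importantly, even if all cofactors were nonzero, a proper subsum of $\sum_{\s\in\calS_{r+1}}\sgn(\s)u_{\s,p}$ is indexed by an \emph{arbitrary} subset of $\calS_{r+1}$, not just by the cosets corresponding to a Laplace expansion, so nonvanishing of cofactors does not preclude vanishing subsums. Without non-degeneracy you cannot apply the Evertse--Schlickewei--Schmidt bound to the full sum, and this is exactly where the real work in the paper lies: they fix, for each choice $p$ of columns, a maximal partition $\calI_p$ of $\calS_{r+1}$ into minimal vanishing subsums, apply the $S$-unit theorem to each block separately, and then must show that the resulting map $\bfF$ on exponent tuples $\bfm$ is still injective. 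The delicate point (their Lemmas 4.8 and 4.9) is that injectivity can genuinely fail for the ``exceptional'' partitions subordinate to $\calI_\bullet^t=\{T_0^t,\dots,T_r^t\}$ with $T_j^t=\{\s:\s(j)=t\}$, and it is precisely the super-rank hypothesis that rules these out, by showing that such a partition forces the matrix with its $t^{\text{th}}$ row deleted to have rank $r-1$. So super-spanning is used not to kill subsums at the outset but to eliminate the bad partitions at the end; your proposal skips the entire partition analysis.

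Two smaller points. Your extraction of the tuple $(d^{k_0},\dots,d^{k_r})$ from the finitely many projective values of the unit-equation solutions also needs an argument (the paper's Lemmas 4.5 and 4.6, resting on the identity $d^a-d^b=d^x-d^y\Rightarrow\{a,y\}=\{b,x\}$ and on the non-root-of-unity hypothesis); it is not automatic. And the specialization to a number field is unnecessary: the Evertse--Schlickewei--Schmidt theorem already holds over any characteristic-zero field with a bound depending only on the number of terms and the rank of the finitely generated group, which is how the uniformity in $P$ and $d$ is obtained directly.
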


Conjecture~\ref{conjecture:finmanylinsp} may be viewed as saying that
the orbit $\calO_{\f(P)}$ is ``almost'' in linear general position in
the following sense: after removing finitely many linear subspaces
of~$\PP^n$, no $r + 2$ points of~$\calO_{\f(P)}$ are contained in any
of the remaining $(r-1)$-dimensional linear spaces, and moreover, that
the number of linear subspaces that need to be removed is bounded by a
function that depends only on $n$ and $d$.  If, in addition, one knows
that $L\cap\calO_\f(P)$ is finite for all linear spaces $L$, then one
gets a stronger version of ``almost'' in linear general position,
i.e., there exists a finite subset $S\subseteq \Orbit_{\f}(P)$, whose
cardinality is bounded in terms of $n$ and $d$, such that any $n+1$
points in $\Orbit_{\f}(P)\smallsetminus S$ are in linear general
position in $\PP^n$.

In full generality, Conjecture~\ref{conjecture:finmanylinsp} seems
difficult. The primary result in this paper is a proof of the
conjecture for the $d^{\text{th}}$-power map, under the (possibly)
weaker assumption that the coordinates of the point~$P$ are
multiplicatively independent. In this case, we are able to prove a
uniform bound that is independent of both~$P$ \emph{and}~$d$.

\begin{theorem}
\label{theorem:mainthm}
Let $d\ge2$, let
\[
  \f\bigl([z_0,\ldots,z_n]\bigr)=[z_0^d,\ldots,z_n^d]
\]
be the $d^{\text{th}}$-power map on~$\PP^n$, and let
$P\in\PP^n(\CC)$ be a point whose coordinates are nonzero and
multiplicatively independent.  Then
Conjecture~$\ref{conjecture:finmanylinsp}$ is true for~$\f$ and~$P$.
More precisely, the number of super-spanned linear subspaces~$L$ is
bounded solely in terms of~$n$, independent of the point~$P$ and the
degree~$d$.
\end{theorem}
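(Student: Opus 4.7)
The strategy is to encode the super-spanning condition as a vanishing sum of monomials in the multiplicatively independent coordinates, and then to count such relations using the $S$-unit equation bounds of Evertse, Schlickewei, and Schmidt.

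Suppose $L \subset \PP^n$ has dimension $r-1$ and is super-spanned by orbit points $Q_{i_0}, \ldots, Q_{i_r}$ with $i_0 < \cdots < i_r$. By the definition of super-spanning, there exist nonzero constants $c_0, \ldots, c_r \in \CC^{\ast}$ with
\[
\sum_{k=0}^{r} c_k\, a_j^{d^{i_k}} = 0 \quad \text{for every } j = 0, 1, \ldots, n.
\]
Since $r \le n$, one may adjoin any $n-r$ further orbit indices $i_{r+1} < \cdots < i_n$ and observe that the resulting $(n+1)\times(n+1)$ matrix $\bigl(a_j^{d^{i_k}}\bigr)$ still has rank at most $n$, hence vanishing determinant. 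Thus every super-spanned $L$ yields an ordered tuple $(i_0,\ldots,i_n)$ for which $D(i_0,\ldots,i_n) := \det\bigl(a_j^{d^{i_k}}\bigr) = 0$, and each such tuple accounts for only boundedly many super-spanned subspaces. It therefore suffices to bound the number of tuples $(i_0<\cdots<i_n)$ with $D = 0$.

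By the Leibniz formula,
\[
D(i_0,\ldots,i_n) = \sum_{\sigma \in S_{n+1}} \sgn(\sigma) \prod_{j=0}^{n} a_j^{d^{i_{\sigma(j)}}}.
\]
Multiplicative independence of $a_0,\ldots,a_n$ implies that the $(n+1)!$ monomials $M_\sigma := \prod_j a_j^{d^{i_{\sigma(j)}}}$ are pairwise distinct elements of the free multiplicative group $\Gamma := \langle a_0, \ldots, a_n \rangle$ of rank $n+1$, so the vanishing of $D$ is a genuine $\pm 1$-coefficient linear dependence among $(n+1)!$ elements of $\Gamma$. The Evertse-Schlickewei-Schmidt bound for non-degenerate $S$-unit equations in a group of rank $s$ with $N$ terms provides a count depending only on $N$ and $s$; here $N = (n+1)!$ and $s = n+1$, yielding a bound depending only on $n$.

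The main technical obstacle is handling vanishing subsums in the Leibniz expansion. The Evertse-Schlickewei-Schmidt bound applies directly only to relations without vanishing proper subsums, so one must analyze all partitions of $\{M_\sigma\}_{\sigma \in S_{n+1}}$ into minimal vanishing sub-relations and apply the bound to each component. Here multiplicative independence is crucial: it forces any cancellation among $M_\sigma$'s to reflect a combinatorial pairing in $S_{n+1}$, not an accidental coincidence of values. A case analysis on the possible partitions, each producing a sub-system that is again controlled by Evertse-Schlickewei-Schmidt, yields the required uniform bound depending only on $n$, independent of both $P$ and $d$.
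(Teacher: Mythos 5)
There is a genuine gap, and it occurs at the very first reduction. You pass from the $(r+1)$ witnessing points of a super-spanned $(r-1)$-plane to an $(n+1)\times(n+1)$ determinant by adjoining $n-r$ arbitrary further orbit indices, and then assert that it suffices to bound the number of tuples $(i_0<\cdots<i_n)$ with $D=0$. But for $r<n$ that set is typically \emph{infinite}: if even one $(r+1)$-tuple of orbit points is linearly dependent, then every $(n+1)$-tuple containing it has $D=0$, no matter which $n-r$ extra points you adjoin, and there are infinitely many such choices. So you have reduced the theorem to a false statement, and the infinitude is caused precisely by the configurations you are trying to count. The correct object to study is the $(r+1)\times(n+1)$ matrix whose rows are the $r+1$ witnessing points together with \emph{all} of its $(r+1)\times(r+1)$ minors (the paper's family $\calP_{r,n}$); the simultaneous vanishing of all these minors, combined with the super-spanning hypothesis on the rows, is what carries the needed information. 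Your padded determinant vanishes for a ``trivial'' reason (row dependence among the first $r+1$ rows), so its Leibniz expansion has a subsum structure that is unconstrained by the hypothesis.

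The second, equally serious gap is that the vanishing-subsum analysis you defer to ``a case analysis on the possible partitions'' is the entire content of the proof, and it does not go through the way you suggest. Applying Evertse--Schlickewei--Schmidt to each minimal vanishing block $T$ only bounds the projective point $[M_\sigma]_{\sigma\in T}$; you must then show that the collection of these projective points determines the exponent tuple. Even in the non-degenerate case this requires an argument (knowing all ratios $M_\sigma/M_\tau$ gives, via multiplicative independence, identities of the form $d^{a}-d^{b}=d^{x}-d^{y}$, from which one recovers the exponents by an elementary lemma), which you omit. Worse, for the ``exceptional'' partitions --- those refining the partition of $\calS_{r+1}$ by the value of $\sigma^{-1}(t)$ for a fixed $t$ --- the common factor $\a_{p(j)}^{d^{m_t}}$ cancels from the homogeneous coordinates of each block, so the exponent $m_t$ genuinely cannot be recovered and the map is \emph{not} injective on arbitrary tuples. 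Multiplicative independence does not save you here. The only way out is to show that such a partition forces the matrix obtained by deleting the $t^{\text{th}}$ row to have rank $r-1$, contradicting super-spanning; this is the one place the super-spanning hypothesis is used, and your sketch never invokes it. Without these two ingredients the argument does not establish finiteness, let alone a uniform bound.
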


The proof of Theorem~\ref{theorem:mainthm} can be adapted to prove the
following uniform bound for the number of points in~$\Orbit_\f(P)\cap
L$.  We give the proof of Corollary~\ref{cor:maincor} in
Section~\ref{section:sizeintersection}.

\begin{cor}
\label{cor:maincor}
Let $\f$ and $P$ be as in the statement of
Theorem~$\ref{theorem:mainthm}$.  Then for any linear subspace $L$,
the intersection \text{$\OO_{\f}(P)\cap L$} is finite, and its size is
bounded solely in terms of~$n$, independent of $P$ and $d$.
\end{cor}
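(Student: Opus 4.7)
The plan is to deduce Corollary~\ref{cor:maincor} from the stronger ``linear general position'' statement announced in the abstract: that there exists a finite subset $S\subset\Orbit_\f(P)$, with $|S|$ bounded solely in terms of~$n$, such that any $n+1$ distinct points of $\Orbit_\f(P)\smallsetminus S$ are in linear general position in~$\PP^n$. This statement is the natural upgrade of Theorem~\ref{theorem:mainthm}, obtained by applying the theorem at each intermediate dimension and inductively accounting for orbit points trapped in super-spanned subspaces; I expect its proof to run in parallel with that of Theorem~\ref{theorem:mainthm}.

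Granting such an $S$, the corollary follows by a short contradiction. Let $L\subsetneq\PP^n$ be a proper linear subspace of dimension $r-1\le n-1$, and suppose $L\cap(\Orbit_\f(P)\smallsetminus S)$ contained $r+1$ points $Q_0,\ldots,Q_r$. Since $d\ge2$ and the coordinates of $P$ are multiplicatively independent, the iterates $\f^k(P)=[z_0^{d^k},\ldots,z_n^{d^k}]$ are pairwise distinct (if not, some ratio $z_i/z_0$ would be a root of unity, a forbidden multiplicative relation), so the orbit is infinite. I therefore choose $n-r$ further orbit points $Q_{r+1},\ldots,Q_n$ from $\Orbit_\f(P)\smallsetminus S$. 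By the defining property of~$S$ the $n+1$ points $Q_0,\ldots,Q_n$ must be in linear general position, yet $r+1\le n+1$ of them lie in the $(r-1)$-dimensional space $L$---a contradiction. Hence $|L\cap(\Orbit_\f(P)\smallsetminus S)|\le r$, and
\[
  \bigl|\Orbit_\f(P)\cap L\bigr|\;\le\;|S|+r\;\le\;|S|+n,
\]
which is the uniform bound claimed.

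The main obstacle is the preliminary step, namely establishing the linear general position refinement of Theorem~\ref{theorem:mainthm}. The subtlety is that a single orbit point may lie in many super-spanned subspaces of various dimensions, so the dimension induction has to control how orbit points collapse across such subspaces carefully enough that $|S|$ ends up depending only on $n$ and not on $P$ or $d$. The deduction above is then purely combinatorial and contributes nothing further; all the analytic and Diophantine content is in producing $S$.
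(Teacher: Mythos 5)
Your final deduction is sound as pure combinatorics: given a finite $S\subset\Orbit_\f(P)$ of size bounded in terms of $n$ such that any $n+1$ points of $\Orbit_\f(P)\smallsetminus S$ are projectively independent, and given that the orbit is infinite (true here, since a repeated iterate would force a coordinate ratio to be a root of unity), padding $r+1$ points of $L\cap(\Orbit_\f(P)\smallsetminus S)$ up to $n+1$ orbit points does yield the bound $\#(L\cap\Orbit_\f(P))\le |S|+n$. The gap is the input. The general-position statement is not a consequence of Theorem~\ref{theorem:mainthm} as stated: that theorem only bounds the \emph{number} of super-spanned subspaces, and by itself it does not prevent a single such subspace from containing an unboundedly large (a priori even infinite) piece of the orbit. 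So the natural implementation of your plan --- take $S$ to be the orbit points trapped in the finitely many super-spanned subspaces --- cannot even show $S$ is finite without already knowing the corollary; indeed the paper's introduction derives the general-position refinement \emph{from} the finiteness of $L\cap\Orbit_\f(P)$, i.e., from Corollary~\ref{cor:maincor} itself. As proposed, the argument is circular.

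The missing idea is that the proof of Theorem~\ref{theorem:finmanyH} establishes something finer than the count of subspaces: it bounds $\#\calM_P$, the number of increasing tuples $\bfm$ for which $A_\bfm$ has super-rank $r$ (the injectivity of $\bfF$ is proved at the level of tuples, before passing to $\calL^r_{\f,P}$ via the non-injective map $\bfm\mapsto L_\bfm$). With that tuple-level bound, the corollary follows directly, which is the paper's route: if $L$ contains $N$ orbit points, fix a maximal independent subset of them (of size at most $n$); each remaining point determines a distinct minimal dependent subset, and any minimal dependent set super-spans its span, hence yields an element of $\calM_P$ for the appropriate $r$. Therefore $N\le n+\sum_{r=1}^{n}\#\calM_P$, uniformly in $P$ and $d$. (The same tuple-level bound would also rescue your plan, by taking $S$ to be the set of orbit points whose index occurs in some super-spanning tuple, but some such strengthening of the theorem's statement is indispensable.)
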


We conclude this introduction by giving a brief overview of the key
steps in the proof of Theorem~\ref{theorem:mainthm}.  We
consider~$r+1$ arbitrary iterates of~$\f$ applied to~$P$, say
\[
  Q_0=\f^{m_0}(P),\quad Q_1=\f^{m_1}(P),\;\ldots\quad Q_r=\f^{m_r}(P),
\]
and we assume that~$Q_0,\ldots,Q_r$ super-span a linear subspace~$L$ of
dimension~$r$. The fact that these points lie in~$L$
means that the $(r+1)$-by-$(n+1)$ matrix~$A$ whose rows are the
points~$Q_0,\ldots,Q_r$ has rank~$r$. Hence all of its
$(r+1)$-by-$(r+1)$ minors have zero determinant, and expanding these
determinants as sums over the permutation group~$\calS_{r+1}$ gives various
linear combinations of products of powers of the coordinates of the
point~$P$. We then apply a deep theorem on uniform bounds for the
number of non-degenerate solutions to $S$-unit equations
$u_1+\cdots+u_N=0$ due to Evertse, Schlickewei, and
Schmidt~\cite{MR1923966}.  If none of the subsums over subsets
of~$\calS_{r+1}$ vanishes, the proof is essentially complete. (This is
where we use the multiplicative independence of the coordinates
to~$P$, since what we really get is that certain products of powers of
the coordinates take on only finitely values.) However, it is
certainly possible for subsums of the determinant sums to vanish, so
we need to consider all possible partitions of~$\calS_{r+1}$
associated to vanishing subsums. This leads to a fairly elaborate
argument in which we prove the desired result for ``good'' partitions,
while also characterizing the ``bad'' partitions where we don't get
finiteness and showing that these bad partitions contradict the
assumption that the matrix~$A$ has super-rank~$r$.

%%%%%%%%%%%%%%%%%%%%%%%%%%%%%%%%%%%%%%%%%%%%%%%%%%%%%%%%%%%%%%%%%%%%%%%%%%%%
\section{Related Work on Bounding the Number of Exceptional Subvarieties in
Diophantine Problems} %%
%%%%%%%%%%%%%%%%%%%%%%%%%%%%%%%%%%%%%%%%%%%%%%%%%%%%%%%%%%%%%%%%%%%%%%%%%%%%
In this section we briefly mention some earlier work in which authors
have used $S$-unit equation bounds to show that most Diophantine
problems of various types have at most the generically expected
number of solutions.
\par
We start with work of Evertse, Gy{\H{o}}ry, Stewart, and
Tijdeman~\cite{MR939471} in which they show that up to equivalence,
there are only finitely many equations of the form $ax+by=c$ that have
three or more solutions in $S$-units $x,y\in R_S^*$.  We sketch their
proof. Assuming that there are three solutions, one can eliminate~$a/c$
and~$b/c$ to obtain the determinantal equation
\[
  0 = \det\begin{pmatrix} x_1&y_1&1\\x_2&y_2&1\\ x_3&y_3&1\\\end{pmatrix}
  = x_1y_2-x_2y_1-x_1y_3+x_3y_1+x_2y_3-x_3y_2.
\]
This gives a six-term $S$-unit equation. If no subsum vanishes,
they're essentially done. To finish the proof,
they do a case-by-case analysis
of five special cases where various subsums vanish.  This proof has
some features in common with our proof of
Theorem~\ref{theorem:mainthm}, but our determinants are of arbitrary
size, so the ``case-by-case'' analysis must cover all possible ways in
which subsums of a multi-term sum can vanish.
\par
In the higher dimensional case, Evertse~\cite{MR2093164} considers a
finitely generated subgroup~$\G$ of~$K^n$. He proves that there is
a finite union~$\calA$ of $\G$-equivalence classes of $n$-tuples
$(a_1,\ldots,a_n)$ such that for all $n$-tuples not in~$\calA$, the
set of non-degenerate solutions to the equation
\[
  a_1x_1+\cdots+a_nx_n=1,
  \quad x_1,\ldots,x_n\in\G,
\]
lies in the union of~$2^n$ proper linear subspaces of~$K^n$.  His
proof relies on a result of Laurent~\cite{MR767195} for the number of
non-degenerate points in~$X\cap\G$, where~$X$ is an algebraic
subvariety of~$\GG_m^n$.  Remond~\cite{MR2171293} has generalized
Evertse's result to semi-abelian varieties.
\par
Perhaps closest to the present work is a paper of Schlickewei and
Viola~\cite{MR1785411}.  For fixed $\a_1,\ldots,\a_n\in K^*$ such that
no ratio $\a_i/\a_j$ is a root of unity, they consider solutions to
the determinantal equation
\begin{equation}
  \label{eqn:aiyjmatrix}
  F(y_2,\ldots,y_n) = 
  \det \begin{pmatrix}
    1 & 1 & \cdots & 1 \\
    \a_1^{y_2} & \a_2^{y_2} & \cdots & \a_n^{y_2} \\
    \vdots  & & \cdots & \vdots \\
    \a_1^{y_n} & \a_2^{y_n} & \cdots & \a_n^{y_n} \\
  \end{pmatrix} = 0,
  \qquad y_2,\ldots,y_n\in\ZZ.
\end{equation}
They prove that the equation~\eqref{eqn:aiyjmatrix} has at most
$\exp\bigl((6n!)^{3n!}\bigr))$ solutions with the property that
\emph{all} proper subdeterminants of the matrix appearing
in~\eqref{eqn:aiyjmatrix} are non-zero. They conjecture that a similar
result is true under the weaker assumption that every $(n-1)$-by-$n$
and every $n$-by-$(n-1)$ submatrix has rank~$n-1$.
\par
The theorem and conjecture of Schlickewei and Viola are both stronger
and weaker than our main results. Our results are weaker in two
ways. First, we assume that~$\a_1,\dots,\a_n$ are multiplicatively
independent. Second, we essentially end up considering equations of
the form
\[
  F(d^{z_2},\ldots,d^{z_n}) = 0,
\]
where $d\ge2$ is a fixed integer, so we require that Schlickewei and Viola's 
variables~$y_2,\ldots,y_n$ be powers of~$d$.
\par
Our results are also stronger in two ways. First, we only require that
every $(n-1)$-by-$n$ submatrix of~\eqref{eqn:aiyjmatrix} have
rank~$n-1$, which is less stringent than even their conjectural requirement.
Second, we work much more generally with an $r$-by-$n$ matrix, with the
assumption that it has rank~$r-1$ and that each of its
$(r-1)$-by-$n$ submatrices also has rank~$r-1$. It would be
interesting to see if Schlickewei and Viola's result is true in this
$r$-by-$n$ setting, subject of course to their strong assumption that
every subdeterminant is nonzero.
\par
Both our proof and the proof of Schlickewei and Viola use in a
fundamental way the theorem of Evertse, Schlickewei, and
Schmidt~\cite{MR1923966} bounding the number of solutions to linear
equations taking values in a finitely generated group. The proofs
resemble one another in that they require intricate manipulations of
the various ways in which subsums of a determinantal sum can vanish,
but the proofs differ in many details due to their differing assumptions.

%%%%%%%%%%%%%%%%%%%%%%%%%%%%%%%%%%%%%%%%%%%%%%%%%%%%%%%%%%%%%%%%%%%%%%%%%%%%
\section{Exceptional Linear Subspaces} %%%%%%%%%%%%%%%%%%%%%%%%%%%%%%%%%%%%%
%%%%%%%%%%%%%%%%%%%%%%%%%%%%%%%%%%%%%%%%%%%%%%%%%%%%%%%%%%%%%%%%%%%%%%%%%%%%

For the remainder of this paper we fix an algebraically closed
field~$K$ of characteristic~$0$.  Unless we indicate otherwise, all
varieties, maps, and points are assumed to be defined over~$K$.  We
also note that we use square brackets to denote homogeneous
coordinates.  Throughout $r$ denotes an integer in $\{1, \ldots, n\}$.

\begin{definition}
Let $L\subsetneq\PP^n$ be a linear space of dimension~$r-1$,
so~$r$ points in general position on~$L$ will span~$L$. 
A set of~\text{$r+1$} points
\[
  S = \{Q_0,Q_1,\ldots,Q_r\}\subset L
\]
is said to \emph{super-span~$L$} if every subset of~$S$
containing~$r$ points spans~$L$.  
\par
Similarly, an~\text{$(r+1)$}-by-\text{$(n+1)$} matrix~$A$ is said to
have \emph{super-rank~$r$} if~$A$ has rank~$r$ and further every
submatrix consisting of~$r$ rows of~$A$ has rank~$r$.  From these
definitions, if we let~$S$ denote the set of points in~$\PP^n$
corresponding to the rows of~$A$, then~$A$ has super-rank~$r$ if and
only if~$S$ super-spans an~\text{$(r-1)$}-dimensional linear subspace
of~$\PP^n$.
\end{definition}

\begin{definition}
Let $\f:\PP^n\to\PP^n$ be a morphism, and let $P\in\PP^n$ be a point.
For $r\ge1$ we define the \emph{set of exceptional linear spaces
  for~$\f$ and~$P$} to be the set
\[
  \Linearspaces_{\f,P}^r
    = \left\{ L\subset\PP^n : 
       \begin{tabular}{l}
          $L$ is a linear space of dimension $r-1$ and\\
          $L\cap\Orbit_\f(P)$ contains points $Q_0,\ldots,Q_r$\\
          such that $\{Q_0,\ldots,Q_r\}$ super-spans $L$\\
       \end{tabular}
     \right\}.
\]
\end{definition}

Using this notation, we can rewrite
Conjecture~\ref{conjecture:finmanylinsp} as follows.

\begin{conjecture}
\label{conjecture:linearspacefiniteness}
Let $\f:\PP^n\to\PP^n$ be a morphism of degree~$d\ge2$, and
let~$P\in\PP^n$ be a point whose orbit~$\Orbit_\f(P)$ is Zariski dense
in~$\PP^n$. Then for all $r\ge1$, the set of exceptional linear
spaces~$\Linearspaces_{\f,P}^r$ is finite, and
$\#\Linearspaces_{\f,P}^r$ may be bounded solely in terms of~$n$
and~$d$.
\end{conjecture}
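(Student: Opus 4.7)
The plan is to reduce the super-spanning condition to a family of $S$-unit-type equations and then apply the uniform Evertse--Schlickewei--Schmidt bound on non-degenerate solutions, following the strategy the introduction outlines for the $d$-th power case and hoping to extend it to a general morphism $\f$. The full conjecture is stated to be difficult, so my proposal is essentially to push the Theorem~\ref{theorem:mainthm} argument as far as it will go and to identify precisely where it breaks.

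First I would fix $r+1$ iterates $Q_i = \f^{m_i}(P)$ for some indices $m_0 < m_1 < \cdots < m_r$, record homogeneous coordinate vectors for the $Q_i$ as the rows of an $(r+1)\times(n+1)$ matrix $A$, and rephrase the super-spanning hypothesis as the statement that $A$ has super-rank $r$. Then every $(r+1)\times(r+1)$ minor of $A$ vanishes, while every $r$-row submatrix still has rank $r$. Expanding a vanishing minor as a signed sum over $\calS_{r+1}$ produces a vanishing linear combination of $(r+1)!$ monomials in the coordinate entries of the $Q_i$. For the $d$-th power map these monomials are products of the form $\prod_j p_j^{a_j d^{m_i}}$ in the coordinates $p_0,\ldots,p_n$ of $P$, and they all lie in the finitely generated multiplicative group generated by the $p_j$, which is exactly the setup where Evertse--Schlickewei--Schmidt supplies a bound on non-degenerate solutions that depends only on $n$ and $r$.

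Next I would try to convert the Evertse--Schlickewei--Schmidt bound into a bound on the indices $m_i$. In the $d$-th power setting, multiplicative independence of the $p_j$ lets one read a non-degenerate vanishing relation as a system of integer equations in the exponents $d^{m_i}$, and since $d \ge 2$ these equations force the $m_i$ into a bounded set. For the full conjecture with a general $\f$, this is the first serious obstruction: one needs to place the coordinate entries of the $Q_i$ into a finitely generated group with controlled rank, presumably via canonical heights or an arithmetic model of $\f$, and then find a replacement for multiplicative independence that still forces the iterate indices to be bounded. I expect this to be the main obstacle, and the reason the paper restricts to the $d$-th power map.

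The remaining difficulty, present even in the $d$-th power case, is that Evertse--Schlickewei--Schmidt counts only non-degenerate solutions, so one must confront partitions of $\calS_{r+1}$ whose associated subsums of the determinantal expansion vanish. My approach would be to classify such partitions as \emph{good}, where the $S$-unit bound still applies blockwise, or \emph{bad}, where it does not; for bad partitions one must translate the combinatorial vanishing pattern back into a linear dependence among some $r$-subset of rows of $A$, contradicting the super-rank~$r$ hypothesis. Carrying out this combinatorial case analysis uniformly in $r$, $n$, and (for the full conjecture) $d$, while showing that only finitely many bad partitions survive, is the delicate technical step that must be grinded out, and keeping the final count dependent only on $n$ and $d$ requires tracking constants carefully through each reduction.
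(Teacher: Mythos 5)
The statement you are addressing is a \emph{conjecture}: the paper does not prove it for general morphisms $\f$, and neither does your proposal. What the paper actually establishes is Theorem~\ref{theorem:finmanyH}, the special case where $\f$ is the $d^{\text{th}}$-power map and $\Relations(P)=\{\bfzero\}$ (which, for the power map, is the form the Zariski-density hypothesis takes). Your outline of that special case tracks the paper's argument essentially step for step: encoding super-spanning as super-rank of the matrix $A_\bfm$, expanding the vanishing $(r+1)$-by-$(r+1)$ minors as signed sums over $\calS_{r+1}$, invoking the Evertse--Schlickewei--Schmidt bound (Theorem~\ref{theorem:Suniteqn}), recovering $\bfm$ from the projective tuples $\bigl[u_{\s,p}(\bfm)\bigr]$ via the identity $d^a-d^b=d^x-d^y\Rightarrow\{a,y\}=\{b,x\}$ (Lemmas~\ref{lemma:dadbeqdxdy} and~\ref{lemma:MtoPinjective}), and handling vanishing subsums by partitioning $\calS_{r+1}$ and showing that the only partitions obstructing injectivity of $\bfF$ --- the subpartitions of $\calI_\bullet^t$ --- force the matrix with its $t^{\text{th}}$ row deleted to drop to rank $r-1$, contradicting super-rank (Lemmas~\ref{lemma:nonexceptinject} and~\ref{lemma:exceptsprrnk}). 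One small correction: there is no issue of ``only finitely many bad partitions surviving.'' There are finitely many partitions of $\calS_{r+1}$ outright; the dichotomy is that for each fixed choice $(\calI_p)_p$, either some $\calI_p$ is non-exceptional (and then $\bfF$ is injective) or all $\calI_p$ are exceptional for a \emph{common} index $t$ (and then the relevant solution set is empty).

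As a proof of the conjecture as stated, the gap is exactly the one you flag yourself: nothing in this machinery applies to a general degree-$d$ morphism. For general $\f$ the determinant expansion does not produce monomials lying in a fixed finitely generated multiplicative group, there is no analogue of Lemma~\ref{lemma:dadbeqdxdy} with which to recover the iterate indices from the $S$-unit data, and the paper offers no substitute (canonical heights are mentioned nowhere and do not obviously supply one). Moreover, Proposition~\ref{proposition:manyexcepthyps} shows the uniform bound genuinely fails once the density hypothesis is dropped, so any general argument must use that hypothesis in an essential, as-yet-unknown way. In short: your proposal is a faithful reconstruction of the paper's partial result together with an accurate diagnosis of the obstruction, but it is not a proof of the conjecture --- and no such proof exists in the paper either.
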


\begin{remark}
\label{remark:someexcept}
For a given map, it is easy to find initial points that lead to at
least a few exceptional linear spaces. Thus fix a
morphism~$\f:\PP^n\to\PP^n$, and let $0<m_0<m_1<\cdots<m_r$ be a list
of integers. Treating the coordinates of $P=[\a_0,\ldots,\a_n]$ as
indeterminates, the condition that
$\f^{m_0}(P),\f^{m_1}(P),\ldots,\f^{m_r}(P)$ span a linear space of
dimension~$r-1$ is equivalent to requiring that $n-r+1$ determinants
vanish, so it puts $n-r+1$ constraints on the coordinates of~$P$. The
super-spanning condition is Zariski open, so we'll ignore it. If we
choose another list of $r+1$ iterates to lie in a linear space of
dimension~$r-1$, we get another $n-r+1$ conditions on~$P$. Hence
generically it should be possible to choose~$P$ so that
\[
  \#\calL_{\f,P}^r \ge \left\lfloor\frac{n}{n-r+1}\right\rfloor.
\]
(Note that the coordinates of~$P$ and the linearity conditions are
homogeneous.)  So for example, for most~$\f:\PP^n\to\PP^n$ one expects
that there exist initial points~$P$ such that the orbit of~$P$
super-spans~$n$ distinct hyperplanes, which is the case $r=n$.  This
also suggests that any effective bound for
\[
  \max\left\{M : 
     \begin{tabular}{@{}l@{}}
       there exist $0<m_0<\cdots<m_r=M$ such\\
       that $\f^{m_0}(P),\ldots,\f^{m_r}(P)$ superspan\\
       a linear subspace of dimension $r-1$\\
     \end{tabular}
  \right\}
\]
must depend on~$P$, since in our construction
we can take $m_r$ to be arbitrarily large.
%%  $\Linearspaces_{\f,P}^r$ 
\end{remark}

\begin{example}
We illustrate Remark~\ref{remark:someexcept} using the map
\[
  \f:\PP^2\longrightarrow\PP^2,\qquad
  \f\bigl([x,y,z]\bigr)=[x^2,y^2,z^2].
\]
We will find an initial point $P=[\a,\b,\g]$ so that
$\#\calL_{\f,P}^1\ge2$, i.e., so that~$\Orbit_\f(P)$ super-spans at
least two lines.  The condition that~$P$,~$\f(P)$, and~$\f^2(P)$ be
colinear, i.e., they super-span a line, is
\begin{equation}
  \label{eqn:det012}
  \det\begin{pmatrix}
    \a & \b & \g \\
    \a^2 & \b^2 & \g^2 \\
    \a^4 & \b^4 & \g^4 \\
  \end{pmatrix}
  = \a\b\g(\a-\b)(\b-\g)(\g-\a)(\a+\b+\g)
  = 0.
\end{equation}
Similarly, the condition that $P$,~$\f^3(P)$, and~$\f^4(P)$ are colinear is
\begin{equation}
  \label{eqn:det034}
  \det\begin{pmatrix}
    \a & \b & \g \\
    \a^8 & \b^8 & \g^8 \\
    \a^{16} & \b^{16} & \g^{16} \\
  \end{pmatrix}
  = \a\b\g(\a-\b)(\b-\g)(\g-\a)h(\a,\b,\g)
  = 0,
\end{equation}
where~$h$ is a complicated homogeneous polynomial of degree~$19$.
Hence $\f$ and~$P$ will have two exceptional lines in~$\PP^2$, i.e.,
$\#\calL_{\f,P}^2\ge2$, if~$P$ is chosen to satisfy the two
simultaneous equations~\eqref{eqn:det012} and~\eqref{eqn:det034} with
$\a\b\g\ne0$ and~$\a,\b,\g$ distinct. Dehomogenizing $\g=1$ and
discarding solutions in which~$\a$ or~$\b$ is a root of unity, we find
that~$P$ has the form~$P=[\a,\b,1]$ with~$\a$ and~$\b$ roots of the
polynomial
\[
  2x^6+6x^5+5x^4+5x^2+6x+2,
\]
such that $\a + \b = - 1$.  The roots of this polynomial have the form
$\{x_1,\bar x_1,x_1^{-1},\bar x_1^{-1},x_2,\bar x_2\}$, with $x_1 +
x_2 = x_1^{-1} + \bar x_1^{-1} = -1$. Since we want~$\a$ and~$\b$ to
be multiplicatively independent, it suffices to
take~$\a=x_1\approx-1.6243 - 0.7812i$ and
$\b=x_2\approx0.6243+0.7812i$. Since $|\a|>1$ and $|\b|=1$, it is
clear that they are multiplicatively independent.
\end{example}

%%%%%%%%%%%%%%%%%%%%%%%%%%%%%%%%%%%%%%%%%%%%%%%%%%%%%%%%%%%%%%%%%%%%%%%%%%%%
\section{Finiteness of Exceptional Linear Subspaces for the $d$-Power Map} %
%%%%%%%%%%%%%%%%%%%%%%%%%%%%%%%%%%%%%%%%%%%%%%%%%%%%%%%%%%%%%%%%%%%%%%%%%%%%

Our main result says that subject to a multiplicative independence
assumption on the coordinates of the point~$P$, a strengthened form of
the finiteness conjecture
(Conjecture~\ref{conjecture:linearspacefiniteness}) is true for the
$d^{\text{th}}$-power map on~$\PP^n$.

\begin{definition}
Let $P = [\alpha_0,\alpha_1,\cdots,\alpha_n] \in \PP^n$ be a point
not contained in any coordinate hyperplane, i.e., satisfying
$\a_0\a_1\cdots\a_n\ne0$.  We define the \emph{multiplicative relation
  set of~$P$} to be the set
\[
  \Relations(P) = \left\{ \bfe=(e_0,\ldots,e_n)\in\ZZ^{n+1} :
  \sum_{i=0}^n e_i=0 \quad\text{and}\quad \prod_{i=0}^n \a_i^{e_i}=1
  \right\}.
\]
We observe that~$\Relations(P)$ is a sublattice of~$\ZZ^{n+1}$.
\end{definition}

\begin{theorem}
\label{theorem:finmanyH}
Let~$\f:\PP^n\to\PP^n$ be the $d^{\text{th}}$-power map
\begin{equation}
  \label{eqn:dthpowermap}
  \f\bigl([z_0,\ldots,z_n]\bigr)
     =[z_0^d,\ldots,z_n^d].
\end{equation}
Fix a point $P \in \PP^n$ whose relation set satisfies
$\Relations(P)=\{\bfzero\}$. Then for all~$0\le r\le n$, the set of
exceptional linear spaces~$\Linearspaces^r_{\f,P}$ is finite. Further
there is an upper bound for~$\#\Linearspaces^r_{\f,P}$ that depends
only on~$n$, independent of the point~$P$ and the degree~$d$.
\end{theorem}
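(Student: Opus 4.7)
The plan is to reduce the super-spanning condition to a family of $S$-unit equations and then apply the uniform bound of Evertse, Schlickewei, and Schmidt \cite{MR1923966}, using the hypothesis $\Relations(P)=\{\bfzero\}$ to ensure that the relevant multiplicative group has controlled rank and that distinct tuples of iterates produce distinct solutions.

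Since each $L\in\Linearspaces^r_{\f,P}$ contains some super-spanning set $\{\f^{m_0}(P),\ldots,\f^{m_r}(P)\}$ with $0\le m_0<\cdots<m_r$, bounding the number of such tuples bounds $\#\Linearspaces^r_{\f,P}$. Writing $P=[\alpha_0,\ldots,\alpha_n]$, let $A=A(m_0,\ldots,m_r)$ be the matrix with entries $A_{ik}=\alpha_k^{d^{m_i}}$. The super-rank $r$ hypothesis forces every $(r+1)\times(r+1)$ minor of $A$ to vanish. Fixing a column subset $J=\{j_0<\cdots<j_r\}\subset\{0,\ldots,n\}$, the expansion of the corresponding minor gives
\[
D_J(m_0,\ldots,m_r)=\sum_{\sigma\in\mathcal{S}_{r+1}}\sgn(\sigma)\prod_{i=0}^r\alpha_{j_{\sigma(i)}}^{d^{m_i}}=0,
\]
an $S$-unit equation in $(r+1)!$ terms lying in the multiplicative group $\Gamma\subset K^*$ generated by $\alpha_{j_0},\ldots,\alpha_{j_r}$; the assumption $\Relations(P)=\{\bfzero\}$ implies $\rank(\Gamma)\le r+1$.

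In the non-degenerate case --- when no proper subsum of $D_J$ vanishes --- the ESS theorem supplies a bound for the number of projective solutions that depends only on $(r+1)!$ and $\rank(\Gamma)$, hence only on $n$. I would then verify that distinct tuples $(m_i)$ produce projectively inequivalent solutions: comparing ratios of terms indexed by permutations differing by a transposition $(a,b)$ shows that projective equivalence of the solutions attached to $(m_i)$ and $(m'_i)$ forces $d^{m_a}-d^{m'_a}$ to be independent of $a$; a valuation argument at any prime dividing $d$, combined with the strict ordering of the $m_i$, then forces $m_a=m'_a$ for all $a$. This settles the non-degenerate case uniformly.

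The main obstacle is the possibility of vanishing proper subsums of $D_J$, which obstructs the direct application of ESS. Here I would carry out a careful combinatorial analysis of the partitions of $\mathcal{S}_{r+1}$ whose parts correspond to vanishing subsums, arguing dichotomously for each such partition: either (i) each vanishing subsum can itself be processed by a recursive application of ESS, producing a finiteness bound still uniform in $n$; or (ii) the combinatorial structure of the partition translates into a linear dependence among fewer than $r+1$ rows of $A$, contradicting the super-rank $r$ hypothesis. Generalizing the case-by-case argument of Evertse-Győry-Stewart-Tijdeman \cite{MR939471} from $3\times 3$ determinants to $(r+1)\times(r+1)$ determinants of arbitrary size, and controlling the combinatorial explosion while keeping the final bound independent of $d$ and $P$, will be the principal technical difficulty.
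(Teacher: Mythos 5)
Your overall strategy coincides with the paper's: reduce to counting strictly increasing tuples $\bfm=(m_0,\ldots,m_r)$ whose associated matrix has super-rank $r$, expand each $(r+1)\times(r+1)$ minor as a sum over $\calS_{r+1}$ of monomials in the $\alpha_j$, apply the Evertse--Schlickewei--Schmidt bound, and recover $\bfm$ from the resulting projective data. Your non-degenerate case is essentially correct and matches the paper's Lemmas~\ref{lemma:dadbeqdxdy} and~\ref{lemma:MtoPinjective}: the transposition trick plus the identity $d^a-d^b=d^x-d^y\Rightarrow\{a,y\}=\{b,x\}$ (your valuation argument) does pin down $\bfm$, using $\Relations(P)=\{\bfzero\}$ to pass from multiplicative identities to identities of exponents.

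The genuine gap is the degenerate case, which is the heart of the theorem and which you leave as a proposed dichotomy without establishing either horn. Two points are missing. First, your dichotomy slightly misplaces the difficulty: ESS \emph{always} applies to each minimal vanishing block $T$ of a partition $\calI_p$ of $\calS_{r+1}$ and always yields a finite image; what can fail is injectivity, because the projective normalization is now only blockwise, so you may only compare $u_{\sigma,p}$ and $u_{\tau,p}$ for $\sigma,\tau$ lying in the \emph{same} block. Your transposition argument compares $\id$ with $(ab)$, which need not lie in a common block, so it does not carry over. The paper instead extracts, for arbitrary $\sigma,\tau\in T$, the implication that $m_t\ne\widetilde m_t$ forces $\sigma^{-1}(t)=\tau^{-1}(t)$; hence injectivity can only fail when every $\calI_p$ refines the specific partition $\calI^t_\bullet=\{T^t_0,\ldots,T^t_r\}$ with $T^t_j=\{\sigma:\sigma(j)=t\}$, for one index $t$ common to all $p$. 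Second, in that exceptional case one must still reach a contradiction, and this is where super-rank enters: each block $T^t_j$ shares the common factor $\alpha_{p(j)}^{d^{m_t}}$, and after cancelling it the vanishing block subsum is exactly the $r\times r$ minor of $A_{\bfm,p}$ obtained by deleting row $t$ and column $p(j)$; since this holds for all $p$ and $j$, deleting row $t$ of $A_\bfm$ yields a matrix of rank $r-1$, contradicting super-rank $r$ (this is the paper's Lemma~\ref{lemma:exceptsprrnk}, and matches your horn (ii), but you would need to supply this computation). Without identifying the exceptional partitions explicitly and proving both the injectivity statement for non-exceptional partitions and the rank-drop statement for exceptional ones, the argument is a plan rather than a proof; note also that the final uniform bound requires taking a union over the finitely many choices of partitions $(\calI_p)_p$, whose number depends only on $n$.
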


\begin{remark}
Theorem~\ref{theorem:finmanyH} deals with linear subspaces of
arbitrary dimension, but on first reading it may be easier for the
reader to consider the case of hyperplanes, i.e., $r=n$. In
particular, setting~$r=n$ means that the set~$\calP_{r,n}$ defined
after the proof of Lemma~\ref{lemma:MtoHsurjective} contains only one
element, namely the identity map on~$\{0,\ldots,n\}$, which
significantly simplifies the exposition.
\end{remark}

\begin{proof}
First, we observe that if any coordinate of~$P$ is~$0$, then we can
discard that coordinate and work on a lower dimensional projective
space.  So by induction on~$n$, we may assume that~$P$ does not lie in
any of the coordinate hyperplanes.
\par
Second, we note that the assumption that $\Relations(P)=\{\bfzero\}$
implies that~$P$ is not preperiodic, since $\f^m(P)=P$ implies that
$\a_i^{d^m-1}\a_j^{1-d^m}=1$ for all~$i$ and~$j$.  (Of course, if~$P$
is preperiodic, it is obvious that $\Linearspaces^r_{\f,P}$ is finite,
since a finite set of points (super)spans only finitely many linear
spaces. But it is not clear that there is a uniform bound for
$\#\Linearspaces^r_{\f,P}$ independent of $P$.)
\par
For the initial part of our proof, it suffices to assume that~$P$ is
not preperiodic and does not lie in a coordinate hyperplane.
For possible future applications, we start with only these assumptions,
and we will indicate where our proof first uses the stronger
condition $\Relations(P)=\{\bfzero\}$. 
\par
Write 
\[
  P = [\alpha_0 , \alpha_1 , \ldots , \alpha_n],
\]
where by assumption we have $\a_0\a_1\cdots\a_n\ne0$.  
For any $(r+1)$-tuple of integers $\bfm=(m_0,\ldots,m_r)$, we define an
$(r+1)$-by-$(n+1)$ matrix (depending on~$P=[\a_0,\ldots,\a_n]$)
\[
  A_\bfm = \left(\a_j^{d^{m_i}}\right)_{\substack{0\le i\le r\\ 0\le j\le n\\}}.
\]
In other words, the~$i^{\text{th}}$~row of~$A_\bfm$, considered as a
point in~$\PP^n$, is the point~$\f^{m_i}(P)$.  We then define a
collection of exceptional \text{$r+1$}-tuples of iterates by
\[
  \calM_P = \bigl\{\bfm\in\ZZ^{r+1} : 0\le m_0<\cdots<m_r
    \text{ and }  \superrank A_\bfm = r \bigr\}.
\]
\par
We also define
\[
  L_\bfm = \left(\begin{tabular}{@{}l@{}}
    the linear subspace of~$\PP^n$ spanned by the\\
    points whose homogeneous coordinates\\
    are the row vectors of the matrix~$A_\bfm$\\
  \end{tabular}\right)
\]
For~$\bfm\in\calM_P$, the rank condition on~$A_\bfm$ implies
that~$L_\bfm$ is a linear subspace of exact dimension~$r-1$, and the
super-rank condition says that~$L_\bfm$ is spanned by any~$r$ of
the~$r+1$ rows of~$A_\bfm$.

\begin{lemma}
\label{lemma:MtoHsurjective}
The map
\begin{equation}
  \label{eqn:MtoHypmtoHm}
  \calM_P \longrightarrow \Linearspaces^r_{\f,P},\qquad
  \bfm\longrightarrow L_\bfm,
\end{equation}
is surjective.
\end{lemma}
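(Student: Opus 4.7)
The plan is to produce, for each given $L \in \Linearspaces^r_{\f,P}$, an explicit preimage $\bfm \in \calM_P$ under the map $\bfm \mapsto L_\bfm$. The data attached to such an $L$ is precisely what is needed: by the definition of $\Linearspaces^r_{\f,P}$ there exist orbit points $Q_0,\ldots,Q_r \in L \cap \Orbit_\f(P)$ that super-span $L$. I will read these off as the rows of $A_\bfm$ (up to reordering) and then verify that the resulting tuple lies in $\calM_P$ and that its associated linear space is $L$ itself.

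The first step is to write $Q_i = \f^{k_i}(P)$ for some integers $k_i \ge 0$ and argue that the $k_i$ are pairwise distinct. Since $L$ has projective dimension $r-1$, any subset of the $Q_i$ containing fewer than $r$ distinct points spans a linear space of dimension at most $r-2$, and so the super-spanning condition forces the $Q_i$ themselves to be pairwise distinct. Combined with the non-preperiodicity of $P$ (already observed earlier in the proof of Theorem~\ref{theorem:finmanyH} as a consequence of $\Relations(P) = \{\bfzero\}$), the map $k \mapsto \f^k(P)$ is injective, so distinct $Q_i$ yield distinct $k_i$. After relabeling the $Q_i$ I may assume $k_0 < k_1 < \cdots < k_r$ and set $\bfm = (k_0, \ldots, k_r)$.

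The second step is to verify $\bfm \in \calM_P$ and $L_\bfm = L$. The rows of $A_\bfm$ are homogeneous coordinate vectors for the $Q_i$ (in the new order), so they all lie in the $r$-dimensional linear subspace of $K^{n+1}$ corresponding to $L$; hence $\rank A_\bfm \le r$. The super-spanning hypothesis translates directly into the statement that every $r$-row submatrix of $A_\bfm$ has rank $r$, which simultaneously forces $\rank A_\bfm = r$ and matches the condition $\superrank A_\bfm = r$. The projective subspace $L_\bfm$ spanned by these rows then contains each $Q_i$ and is spanned by any $r$ of them, so $L_\bfm = L$.

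There is no serious obstacle here; the content of the lemma is essentially bookkeeping. The super-spanning condition on the $Q_i$ is designed to coincide with the super-rank condition on $A_\bfm$, and the only substantive ingredient is the injectivity of $k \mapsto \f^k(P)$, which is what lets one convert a set of distinct orbit points into a strictly increasing tuple of exponents $\bfm$.
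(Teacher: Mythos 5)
Your proof is correct and follows essentially the same route as the paper's: take the super-spanning orbit points guaranteed by the definition of $\Linearspaces^r_{\f,P}$, order their exponents to form $\bfm$, and observe that the super-spanning condition translates directly into the super-rank condition on $A_\bfm$. (Your appeal to non-preperiodicity is unnecessary --- if $k_i=k_j$ then trivially $Q_i=Q_j$, so distinct points automatically yield distinct exponents --- but this is harmless.)
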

\begin{proof}
The fact that $L_\bfm$ lies in~$\Linearspaces^r_{\f,P}$ follows directly
from the definitions of~$\calM_P$ and~$\Linearspaces^r_{\f,P}$. For the
surjectivity, let~$L\in\Linearspaces^r_{\f,P}$. Then~$L$ contains~$r+1$
points in the orbit~$\Orbit_\f(P)$, which we can label as
$\f^{m_0}(P),\ldots,\f^{m_r}(P)$ with $0\leq m_0<m_1<\cdots<m_r$.  We set
$\bfm=(m_0,\ldots,m_r)$. Since
\[
  \f^{m_j}(P) = \bigl[\a_0^{d^{m_j}},\a_1^{d^{m_j}},\cdots,\a_n^{d^{m_j}}\bigr],
\]
the definition of~$\Linearspaces^r_{\f,P}$ implies that the
matrix~$A_\bfm$ has super-rank~$n$, so~$\bfm$ is in~$\calM_P$, and by
construction, the image of~$\bfm$ in~$\Linearspaces^r_{\f,P}$
is~$L$. 
\end{proof}

\begin{remark}
The map~\eqref{eqn:MtoHypmtoHm} in Lemma~\ref{lemma:MtoHsurjective}
need not be injective. For example, if there is a linear
space~$L\in\Linearspaces^r_{\f,P}$ such that $L\cap\Orbit_\f(P)$
contains~$r+2$ points such that every subset consisting of~$r$ points
spans~$L$, then the cardinality of the preimage of $L$ is at least
$\binom{r+2}{r+1} = r+2$.
\end{remark}

Returning to the proof of Theorem~\ref{theorem:finmanyH}, we define
some additional notation. First, in order to select~$r+1$ columns of
a matrix~$A_\bfm$, we look at the set of maps
\[
  \calP_{r,n} = \left\{
     \begin{tabular}{@{}l@{}}
        strictly increasing maps\\
        $p:\{0,\ldots,r\}\to\{0,\ldots,n\}$\\
     \end{tabular}
  \right\}.
\]
For example, if~$r=n$, then~$p$ is necessarily the identity map, while
if~$r=n-1$, then there are exactly~$n+1$ maps~$p$, each of which is
determined by the one value between~$0$ and~$n$ that is not in the
image of~$p$. For each~$p\in\calP_{r,n}$, we define an
\text{$(r+1)$}-by-\text{$(r+1)$} submatrix of~$A_\bfm$ by
\[
  A_{\bfm,p} = \left(\a_{p(j)}^{d^{m_i}}\right)_{\substack{0\le i\le r\\ 0\le j\le r\\}}.
\]
In other words, the $(r+1)$-by-$(r+1)$ matrix~$A_{\bfm,p}$ is obtained
from the $(r+1)$-by-$(n+1)$ matrix~$A_\bfm$ by taking
columns~$p(0),p(1),\ldots,p(r)$.
\par
To ease notation, we also let
\begin{align*}
  k_i(\bfm) &= d^{m_i}&&\text{for $0\le i\le r$,} \\
  u_{\s,p}(\bfm) 
    &= \alpha_{p(0)}^{k_{\sigma(0)}(\bfm)}\alpha_{p(1)}^{k_{\sigma(1)}(\bfm)}
             \cdots\alpha_{p(r)}^{k_{\sigma(r)}(\bfm)}
    &&\text{for $\s\in\calS_{r+1}$ and $p\in\calP_{r,n}$.}
\end{align*}
With this notation, the determinant of the matrix~$A_{\bfm,p}$ is
\[
  \det (A_{\bfm,p}) =
  \det\bigl( \alpha_{p(j)}^{k_i(\bfm)}\bigr)_{\substack{0\le i\le r\\ 0\le j\le r\\}}
     = \sum_{\sigma \in \calS_{r + 1}} \sgn(\sigma)u_{\sigma,p}(\bfm).
\]
\par 
To simplify notation, we will often suppress the dependence on~$\bfm$ and
just write~$k_i$ and~$u_{\s,p}$, but we stress that our eventual goal is
to show that there are only finitely many~$\bfm$ satisfying certain
conditions, so most of our formulas should be viewed as relations on
the unknown quantity~$\bfm$.

The following elementary lemma will be useful.

\begin{lemma}
\label{lemma:dadbeqdxdy}
Let $d\ge2$ be an integer, and let $a,b,x,y$ be non-negative integers.
Suppose that
\begin{equation}
  \label{eqn:dadbeqdxdy}
  d^a-d^b = d^x - d^y.
\end{equation}
Then $\{a,y\}=\{b,x\}$, i.e.,  either
\[
  \text{$(a=x$ and $b=y)$\quad or\quad $(a=b$ and $x=y)$.}
\]
\end{lemma}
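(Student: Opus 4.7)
The plan is to rewrite the hypothesis as
\[
  d^a + d^y = d^b + d^x,
\]
so that the question becomes: when do two two‐term sums of powers of~$d$ coincide? Sufficiency of the multiset identity $\{a,y\}=\{b,x\}$ is obvious, so the content is the reverse implication. To exploit the structure, let $m=\min(a,y)$, $M=\max(a,y)$, and $m'=\min(b,x)$, $M'=\max(b,x)$; factoring the smallest power of~$d$ out of each side, the equation becomes
\[
  d^{m}\bigl(d^{M-m}+1\bigr) \;=\; d^{m'}\bigl(d^{M'-m'}+1\bigr),
\]
with the convention that the parenthesized factor equals~$2$ when $M=m$ (resp.\ $M'=m'$).

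The key observation is that whenever $M>m$, the integer $d^{M-m}+1$ is congruent to $1$ modulo~$d$ and is therefore coprime to $d$ (since $d\ge2$). In the ``generic'' case $M>m$ and $M'>m'$, comparing $d$-adic valuations in the displayed equation then forces $m=m'$, and cancelling gives $d^{M-m}=d^{M'-m'}$, hence $M=M'$. We conclude $\{a,y\}=\{m,M\}=\{m',M'\}=\{b,x\}$, which is the desired multiset equality.

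The remaining work is to dispose of the degenerate cases. If both sides collapse ($M=m$ and $M'=m'$), then $2d^{m}=2d^{m'}$ forces $m=m'$ and all four exponents agree. If exactly one side collapses, say $M=m$ but $M'>m'$, the equation reads $2d^{m}=d^{m'}\bigl(d^{M'-m'}+1\bigr)$, and a short split on the parity of~$d$ yields a contradiction: when $d$ is odd, matching $d$-adic valuations gives $m=m'$ and then $d^{M'-m'}+1=2$, contradicting $M'>m'$; when $d=2$, the right-hand side has an odd cofactor $2^{M'-m'}+1>1$ while the left-hand side is the pure power $2^{m+1}$, again impossible.

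The argument is elementary, and the only mildly delicate point—the main thing to be careful about—is that the factor of~$2$ arising from the ``collapsed'' sum $d^a+d^a=2d^a$ behaves differently with respect to $d$-adic valuation depending on whether $d$ is even or odd. Handling the $d=2$ and $d$ odd cases separately in the degenerate branches is the cleanest way to obtain a uniform conclusion valid for every $d\ge 2$.
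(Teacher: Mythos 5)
Your overall strategy---rewriting the hypothesis additively as $d^a+d^y=d^b+d^x$, factoring out the minimal exponent from each side, and using that $d^{M-m}+1\equiv 1\pmod d$ is coprime to $d$---is sound, and it is a genuinely different normalization from the paper's: the paper instead multiplies by $-1$ to arrange $a\ge b$ and $x\ge y$, disposes of $a=b$ immediately, and factors the \emph{differences} as $d^b(d^{a-b}-1)=d^y(d^{x-y}-1)$. Because the paper's collapsed case yields $0$ rather than a cofactor of $2$, it needs no discussion of the parity of $d$ at all; that is what your additive rearrangement costs you. Your generic case and the doubly-collapsed case are correct.

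The gap is in the singly-collapsed case ($M=m$, $M'>m'$): you announce a split on the parity of $d$ but then treat only ``$d$ odd'' and ``$d=2$'', omitting every even $d\ge 4$. For $d=4$ your pure-power-of-$2$ argument happens to adapt, but for $d=6$ (say) neither branch applies as written: $2$ is not coprime to $6$, so the valuation-matching argument fails, and $2\cdot 6^m$ is not a pure power of $2$, so the odd-cofactor argument fails too. The repair is easy and needs no parity split: writing $c:=d^{M'-m'}+1$, which satisfies $\gcd(c,d)=1$ and $c\ge d+1\ge 3$, the equation $2d^m=d^{m'}c$ gives a contradiction in each subcase --- if $m<m'$ then $2=d^{m'-m}c\ge 2\cdot 3$; if $m>m'$ then $2d^{m-m'}=c$ is divisible by $d$ while $c$ is coprime to $d$ and exceeds $1$; and if $m=m'$ then $c=2$ forces $M'=m'$. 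With that substitution your proof is complete for all $d\ge 2$.
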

\begin{proof}
Multiplying the relation~\eqref{eqn:dadbeqdxdy} by~$-1$ if necessary,
we may assume that $a\ge b$, and then since the left-hand side is
positive, we also have $x\ge y$. From~\eqref{eqn:dadbeqdxdy} it is
clear that if $a=b$, then $x=y$, and similarly, if~$x=y$, then~$a=b$.
We are thus reduced to the case that $a>b$ and
$x>y$. Factoring~\eqref{eqn:dadbeqdxdy} gives
\[
  d^b(d^{a-b}-1) = d^y(d^{x-y}-1).
\]
Since~$d^{a-b}-1$ and~$d^{x-y}-1$ are both relative prime to~$d$ (this
is where we use the assumption that $a>b$ and $x>y$), we find that
$d^b=d^y$, so $b=y$. It is then clear from~\eqref{eqn:dadbeqdxdy} 
that also $a=x$.
\end{proof}

We next use Lemma~\ref{lemma:dadbeqdxdy} to show that the values of the
products $u_{\s,p}(\bfm)$ determine the value of~$\bfm$

\begin{lemma}
\label{lemma:MtoPinjective}
Let $p\in\calP_{r,n}$ have the property that the point
\[
  [\a_{p(0)},\a_{p(1)},\ldots,\a_{p(r)}]
\]
is not preperiodic for the $d^{\text{th}}$-power map, i.e., at least
one of the ratios~$\a_{p(j)}/\a_{p(0)}$ is not a root of unity.  Then
the map
\begin{equation}
  \label{eqn:MtoPn11map}
  \calM_P\longrightarrow\PP^{(r+1)!- 1},\qquad
  \bfm \longrightarrow \bigl[u_{\s,p}(\bfm)\bigr]_{\s\in\calS_{r+1}},
\end{equation}
is injective.
\end{lemma}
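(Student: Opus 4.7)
The plan is to show that if two tuples $\bfm,\bfm'\in\calM_P$ have the same image under \eqref{eqn:MtoPn11map}, then $\bfm=\bfm'$. I would start from the observation that projective equality of the images provides a scalar $\lambda\in K^*$ with $u_{\sigma,p}(\bfm)=\lambda\,u_{\sigma,p}(\bfm')$ for every $\sigma\in\calS_{r+1}$, and, setting $a_i=d^{m_i}-d^{m'_i}$, rewrite this as $\prod_{i=0}^{r}\alpha_{p(i)}^{a_{\sigma(i)}}=\lambda$ for every $\sigma$. Comparing the equation for $\sigma=(i\,j)$ with the equation for the identity permutation and cancelling the factors indexed by $k\notin\{i,j\}$ yields the family of multiplicative relations
\[
(\alpha_{p(i)}/\alpha_{p(j)})^{a_i-a_j}=1\qquad\text{for all } 0\le i,j\le r.
\]

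The crux of the argument will be to extract from these relations the conclusion that $a_0=a_1=\cdots=a_r$, using the non-preperiodicity assumption. I would fix an index $j_0$ with $\alpha_{p(j_0)}/\alpha_{p(0)}$ not a root of unity, which exists by hypothesis; the relation for the pair $(j_0,0)$ then forces $a_{j_0}=a_0$. For an arbitrary index $j$, the argument splits into two cases: either $\alpha_{p(j)}/\alpha_{p(0)}$ is not a root of unity, in which case the relation directly gives $a_j=a_0$; or it is a root of unity, in which case $\alpha_{p(j)}/\alpha_{p(j_0)}=(\alpha_{p(j)}/\alpha_{p(0)})\cdot(\alpha_{p(0)}/\alpha_{p(j_0)})$ is the product of a root of unity with a non-root of unity and is therefore itself not a root of unity, so the relation for the pair $(j,j_0)$ yields $a_j=a_{j_0}=a_0$.

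Finally, knowing that all $a_j$ coincide, I would apply Lemma~\ref{lemma:dadbeqdxdy} to each identity $d^{m_j}-d^{m_0}=d^{m'_j}-d^{m'_0}$. For $j>0$ the ordering built into the definition of $\calM_P$ forces $m_j>m_0$ and $m'_j>m'_0$, so both sides are positive and the lemma yields $m_0=m'_0$ and $m_j=m'_j$. This gives $\bfm=\bfm'$ and proves injectivity. I expect the main obstacle to be the case-split in the middle paragraph: this is precisely the point at which the hypothesis that $[\alpha_{p(0)},\dots,\alpha_{p(r)}]$ is merely non-preperiodic, rather than having multiplicatively independent coordinates, has to be promoted to the uniform conclusion $a_j=a_0$ for every $j$, by routing each index through $j_0$.
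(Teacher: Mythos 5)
Your proof is correct and follows essentially the same strategy as the paper's: projective equality of the images yields multiplicative relations $(\a_{p(i)}/\a_{p(j)})^{a_i-a_j}=1$ with $a_i=d^{m_i}-d^{m'_i}$, the non-root-of-unity hypothesis kills the exponents, and Lemma~\ref{lemma:dadbeqdxdy} converts equalities of differences of $d$-powers into $\bfm=\bfm'$. The one local difference is how you treat a general index $j$: the paper first pins down $m_0=\widetilde m_0$ and $m_t=\widetilde m_t$ via the transposition $(0t)$, and then compares the two $3$-cycles $(0jt)$ and $(0tj)$, cancelling the now-known factors to force $k_j=\widetilde k_j$ directly; you instead stay with transpositions and route every index through your distinguished $j_0$, using the (correct) observation that a root of unity times a non-root of unity is not a root of unity, which gives the cleaner intermediate statement $a_0=a_1=\cdots=a_r$ before any appeal to Lemma~\ref{lemma:dadbeqdxdy}. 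Both mechanisms are sound; yours is marginally more elementary in that it never needs permutations beyond transpositions.
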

\begin{proof}
Suppose that $\bfm$ and $\widetilde\bfm$ are elements of~$\calM_P$
that have the same image in~$\PP^{(r+1)!-1}$. This means that
\begin{equation}
  \label{eqn:prodaandtildea}
  \prod_i\alpha_{p(i)}^{{k_{\sigma(i)}}} \alpha_{p(i)}^{{\widetilde{k}_{\tau(i)}}} 
   = \prod_i\alpha_{p(i)}^{{k_{\tau(i)}}} \alpha_{p(i)}^{{\widetilde{k}_{\sigma(i)}}}, 
   \quad \textup{ for all $\sigma,\tau \in S_{r+1}$.}
\end{equation}
Applying~\eqref{eqn:prodaandtildea} with $\s=\id$ and $\t$ the
transposition $\t=(0j)$ gives the relation
\begin{equation}
  \label{eqn:aja0kjtildekj}
  (\alpha_{p(j)}/\alpha_{p(0)})^{{k_j} - {\widetilde{k}_j} - {k_0} +  {\widetilde{k}_0}}=1.
\end{equation}
This holds for all~$0\le j\le n$. We are assuming that at least one of
the ratios~$\a_{p(j)}/\a_{p(0)}$ is not a root of unity,
say~$\a_{p(t)}/\a_{p(0)}$ is not a root of unity. Then the exponent on
the left-hand side of~\eqref{eqn:aja0kjtildekj} must vanish, so we
find that
\[
  k_t - k_0 = \tilde{k}_t-\tilde{k}_0.
\]
Using the definition of~$k_j=k_j(\bfm)$, this is equivalent to the equation
\begin{equation}
  \label{dmjdm0dtildemj}
  d^{m_t} - d^{m_0} = d^{\widetilde{m}_t} - d^{\widetilde{m}_0}.
\end{equation}
The fact that~$\bfm\in\calM_P$ means
that~$m_t>m_0$, so~\eqref{dmjdm0dtildemj} and
Lemma~\ref{lemma:dadbeqdxdy} imply that
\begin{equation}
  \label{eqn:m0tildem0}
  m_0=\widetilde{m}_0\quad\text{and}\quad m_t=\widetilde{m}_t.
\end{equation}
\par
We now take an arbitrary index~$i$ and consider the multiplicative
relation~\eqref{eqn:prodaandtildea} for the permutations $\s=(0it)$
and $\t=(0ti)$.  After canceling common terms from the two sides of
the equation, we are left with the formula
\begin{equation}
  \label{eqn:a0kiaiktatk0}
  \a_{p(0)}^{k_i}\a_{p(i)}^{k_t}\a_{p(t)}^{k_0}
  \a_{p(0)}^{\tilde{k}_t}\a_{p(t)}^{\tilde{k}_i}\a_{p(i)}^{\tilde{k}_0}
  =
  \a_{p(0)}^{\tilde{k}_i}\a_{p(i)}^{\tilde{k}_t}\a_{p(t)}^{\tilde{k}_0}
  \a_{p(0)}^{k_t}\a_{p(t)}^{k_i}\a_{p(i)}^{k_0}.
\end{equation}
However, we already know from~\eqref{eqn:m0tildem0} that
$k_0=\tilde{k}_0$ and $k_t=\tilde{k}_t$. This allows us to cancel many
of the terms in~\eqref{eqn:a0kiaiktatk0}, and we find that
\[
  (\a_{p(0)}/\a_{p(t)})^{k_i-\tilde{k}_i}=1.
\]
We know that~$\a_{p(0)}/\a_{p(t)}$ is not a root of unity, since
that's how we chose~$t$, so it follows that $k_i=\tilde{k}_i$. This is
true for all~$i$, and since~$k_i=d^{m_i}$ and
$\tilde{k}_i=d^{\tilde{m}_i}$, we have proven that $m_i=\tilde{m}_i$
for all~$0\le i\le r$.  Hence~$\bfm=\tilde{\bfm}$, which completes the
proof that the map~\eqref{eqn:MtoPn11map} is injective.
\end{proof}

We resume the proof of Theorem~\ref{theorem:finmanyH}. Our goal is to
show that the set $\Linearspaces^r_{\f,P}$ is finite, so in view of
the surjectivity of the map~\eqref{eqn:MtoHypmtoHm} in
Lemma~\ref{lemma:MtoHsurjective}, it suffices to show that the
set~$\calM_P$ is finite.  Let~$\bfm\in\calM_P$. Then the
$(r+1)$-by-$(n+1)$ matrix~$A_\bfm$ has rank~$r$, so all of its
$(r+1)$-by-$(r+1)$ minors vanish. In our notation, 
\begin{equation}
  \label{eqn:detAmeq0}
  \det A_{\bfm,p} = \sum_{\s\in\calS_{r+1}} \sgn(\s)u_{\s,p}(\bfm) = 0
  \qquad\text{for all $p\in\calP_{r,n}$.}
\end{equation}
We will use the following deep result on~$S$-unit equations, which
we will apply with
\[
  \G = \{\text{subgroup of $K^*$ generated by $-1,\a_0,\ldots,\a_n$}\}.
\]

\begin{theorem}
\label{theorem:Suniteqn}
Let~$K$ be a field of characteristic~$0$, let~$\Gamma$ be a finitely
generated subgroup of~$K^*$, and let~$a_0,\ldots,a_N\in\Gamma$. Then
the equation
\[
  a_0u_0+a_1u_1+\cdots+a_Nu_N=0
\]
has only finitely many solutions $[u_0,\ldots,u_N]\in\PP^N(K)$ satisfying
\[
  u_0,\ldots,u_N\in\Gamma
\]
and
\[
  \sum_{i\in I} a_iu_i\ne0
  \quad\text{for all nonempty subsets $I\subsetneq\{0,1,\ldots,N\}$.}
\]
Further, the number of such solutions may be bounded solely
in terms of~$N$ and~$\rank(\Gamma)$.
\end{theorem}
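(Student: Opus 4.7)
The plan is to deduce this from the quantitative $p$-adic Schmidt subspace theorem, which is the standard technology for bounding solutions of $S$-unit equations in many variables. First I would absorb the coefficients into the variables by setting $v_i=a_iu_i$, at the cost of replacing $\G$ by the finitely generated group $\G'=\langle\G,a_0,\ldots,a_N\rangle$. This reduces the problem to counting non-degenerate projective solutions $[v_0,\ldots,v_N]$ of the symmetric equation $v_0+v_1+\cdots+v_N=0$ with $v_i\in\G'$, and $\rank(\G')$ is bounded in terms of $\rank(\G)$ and~$N$.

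Next, since $\G'$ is finitely generated and $K$ has characteristic zero, a standard specialization argument reduces to the case where $K$ is a number field and $\G'$ sits inside the $S$-unit group $\calO_{K,S}^*$ for some finite set of places~$S$. Concretely, one embeds the finitely generated subring of $K$ generated by $\G'$ into $\Qbar$ by specializing a transcendence basis, and checks that the non-degeneracy of a solution is preserved under a sufficiently generic specialization.

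With this setup, each non-degenerate projective solution gives a point on the hyperplane $\{x_0+\cdots+x_N=0\}\subset\PP^N(K)$ whose coordinate ratios $v_i/v_j$ are $S$-integral. A standard place-by-place accounting shows that such a point provides a simultaneous good approximation at every place of~$S$ to the coordinate linear forms on $\PP^{N-1}$. Schlickewei's $p$-adic subspace theorem then forces all but finitely many such points to lie in a finite union of proper linear subspaces of the ambient hyperplane, and the non-degeneracy hypothesis (no proper subsum vanishes) eliminates the solutions lying in any such subspace. This argument yields the qualitative finiteness statement.

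The main obstacle, and the genuinely deep input, is the quantitative refinement: one needs a bound that depends \emph{only} on $N$ and $\rank(\G)$ and is independent of the field $K$, the coefficients $a_0,\ldots,a_N$, the generators of~$\G$, and especially the set of places~$S$. Achieving this uniformity requires the full strength of the quantitative subspace theorem developed by Evertse, Schlickewei, and Schmidt in~\cite{MR1923966}, whose machinery I would invoke as a black box rather than attempt to reprove.
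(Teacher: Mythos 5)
Your proposal is correct and ends up in the same place as the paper, which proves this statement simply by citing Evertse--Schlickewei--Schmidt~\cite{MR1923966} (or \cite[Theorem~6.2]{MR2582101}) as a black box. Your preliminary reductions (absorbing the coefficients into an enlarged group of controlled rank, specializing to the number-field case) and the appeal to the quantitative subspace theorem accurately outline the strategy inside that cited work, but since you too invoke the quantitative machinery of~\cite{MR1923966} for the uniform bound, the argument is essentially the same citation.
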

\begin{proof}
See~\cite[Theorem~6.2]{MR2582101} or~\cite{MR1923966} for explicit
upper bounds.
\end{proof}

As a warm-up, we first consider the set of elements~$\bfm\in\calM_P$
such that for every~$p\in\calP_{r,n}$, no subsum in the determinant
equation~\eqref{eqn:detAmeq0} equals~$0$.
Theorem~\ref{theorem:Suniteqn} tells us that the equation
\[
  \sum_{\s\in\calS_{r+1}} \sgn(\s) v_\s = 0
\]
has only finitely many solutions in $\PP^{(r+1)!-1}(K)$ with $v_\s\in
R^*$ and such that no subsum equals~$0$. Hence there are only finitely
many possible values for the point
\[
  \bigl[u_{\s,p}(\bfm)\bigr]_{\s\in\calS_{r+1}}\in \PP^{(r+1)!-1}(K).
\]
The assumption that~$P$ is not preperiodic means that at least one
ratio~$\a_t/\a_0$ is not a root of unity. We take~$p\in\calP_{r,n}$
such that~$0$ and~$t$ are in the image of~$p$, which allows us to
apply Lemma~\ref{lemma:MtoPinjective} to conclude that the
map~\eqref{eqn:MtoPn11map} is injective.  Hence there are only
finitely many values for~$\bfm$. Further, the uniformity in
Theorem~\ref{theorem:Suniteqn} gives a uniform upper bound
for~$\calM_P$, and hence for~$\#\Linearspaces^r_{\f,P}$.
\par
We now consider the general case in which one or more subsums
in~\eqref{eqn:detAmeq0} may be equal to~$0$. In this case the
conclusion of Theorem~\ref{theorem:Suniteqn} is false, since we can
scale individual zero subsums. In general, we look at
partitions~$\calI$ of~$\calS_{r+1}$, i.e.,
\[
  \text{$\calI=\{T_1,\ldots,T_s\}$\quad with\quad $T_i\cap T_j=\emptyset$
   \quad and\quad $T_1\cup\cdots\cup T_s=\calS_{r+1}$}.
\]
For each~$p\in\calP_{r,n}$ we want to choose a maximal partition~$\calI_p$
of~$\calS_{r+1}$ such that
\[
  \sum_{\s\in T} \sgn(\s)u_{\s,p}(\bfm)=0
  \quad\text{for all $T\in\calI_p$.}
\]
\par
Let
\[
  \calV = \left\{ [v_\s]_{\scriptscriptstyle\s\in\calS_{r+1}} \in \PP^{(r+1)!-1} : 
      \text{$v_\s\in\G$ and } \sum_{\s\in\calS_{r+1}}\sgn(\s)v_\s=0
     \right\},
\]
and for each partition~$\calI$ of~$\calS_{r+1}$, define
\[
  \calV_\calI
  = \left\{ [v_\s]\in\calV : 
      \begin{array}{l}
        \displaystyle\sum_{\s\in T}\sgn(\s)v_\s=0
        \text{ for all $T\in\calI$, and} \\
        \displaystyle\sum_{\s\in T'}\sgn(\s)v_\s\ne0
        \text{ for all $T'\subsetneq T\in\calI$} \\
      \end{array}
\right\}.
\]
We note that 
\[
  \calV 
  = \bigcup_{\substack{\text{$\calI$ is a partition}\\\text{of $\calS_{n+1}$}\\}}\calV_\calI,
\]
although the $\calV_{\calI}$ are not necessarily disjoint.
\par
For each~$p\in\calP_{r,n}$ we now fix a partition~$\calI_p$
of~$\calS_{n+1}$. This choice of partitions will be fixed for the 
remainder of the proof. We claim that the set
\begin{equation}
  \label{eqn:usmcapVI}
  \left\{ \left(\bigl[u_{\s,p}(\bfm)\bigr]_{\s\in\calS_{r+1}}\right)_{p\in\calP_{r,n}}
         : \bfm\in\calM_P\right\} \cap \prod_{p\in\calP_{r,n}}\calV_{\calI_p}
\end{equation}
is finite (and has order bounded in terms of~$n$). This claim
will complete the proof of Theorem~\ref{theorem:finmanyH}, since from
the definitions it is clear that for all~$p$,
\[
  \left\{ \bigl[u_{\s,p}(\bfm)\bigr]_{\s\in\calS_{r+1}} 
         : \bfm\in\calM_P\right\} \subset \calV,
\]
and Lemma~\ref{lemma:MtoPinjective} tells us that the value of
$\left(\bigl[u_{\s,p}(\bfm)\bigr]_{\s\in\calS_{r+1}}\right)_{p\in\calP_{r,n}}$
determines the value of~$\bfm$.
\par
The definition of~$\calV_\calI$ says that for each~$T\in\calI$, a
certain sum of $S$-units is zero and no subsum is zero. Hence we can
apply Theorem~\ref{theorem:Suniteqn} to each sum over~$T$. In other
words, if we map
\[
  \calV_\calI \longrightarrow \prod_{T\in\calI} \PP^{\#T-1},\qquad
  [v_\s]_{\scriptstyle\s\in\calS_{n+1}} \longmapsto
   \bigl([v_\s]_{\scriptstyle\s\in T}\bigr)_{T\in\calI},
\]
then the image of this map is finite and has order bounded in
terms of~$n$. 

Hence in order to prove that the set~\eqref{eqn:usmcapVI} is finite,
it suffices to show that the map
\begin{equation}
  \label{eqn:usmtoTIPK}
  \begin{aligned}
   \bfF :  \calM_P
    &\longrightarrow \prod_{p\in\calP_{r,n}} \prod_{T\in\calI_p} \PP^{\#T-1},\\
    \bfm
    &\longmapsto \left(\bigl[u_\s(\bfm)\bigr]_{\s\in T}
    \right)_{\substack{p\in\calP_{r,n}\\ T\in\calI_p\\}},
  \end{aligned}
\end{equation}
is injective. 

\begin{lemma}
\label{lemma:subpartition}
For each~$p\in\calP_{r,n}$, let~$\calJ_p$ be a subpartition
of~$\calI_p$, i.e,.  for every~$T\in\calJ_p$ there is a~$T'\in\calI_p$
such that~$T\subset T'$.  Write~$F_\calI$ for the
map~\eqref{eqn:usmtoTIPK} using the~$\calI_p$ partitions, and
write~$F_\calJ$ for the map~\eqref{eqn:usmtoTIPK} using the~$\calJ_p$
partitions. Then
\[
  \bfF_\calI(\bfm) = \bfF_\calI(\widetilde\bfm)
  \quad\Longrightarrow\quad
  \bfF_\calJ(\bfm) = \bfF_\calJ(\widetilde\bfm).
\]
\end{lemma}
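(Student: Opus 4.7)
The plan is to trace through the definition of the projective-space product appearing in the target of $\bfF$ and observe that a refinement inherits the scalar proportionality automatically; no new input is required beyond unpacking what $\bfF_\calI$ and $\bfF_\calJ$ actually mean.

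First I would unpack the hypothesis $\bfF_\calI(\bfm) = \bfF_\calI(\widetilde{\bfm})$ factor-by-factor. Equality in the product
\[
 \prod_{p \in \calP_{r,n}} \prod_{T \in \calI_p} \PP^{\#T - 1}
\]
is equivalent to the statement that, for each pair $(p, T)$ with $p \in \calP_{r,n}$ and $T \in \calI_p$, there exists a scalar $\lambda_{p, T} \in K^{*}$ with
\[
 u_{\s, p}(\bfm) = \lambda_{p, T}\, u_{\s, p}(\widetilde{\bfm}) \qquad \text{for every } \s \in T.
\]

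Next I would exploit the subpartition hypothesis. Fix $p \in \calP_{r, n}$ and $T \in \calJ_p$; by assumption there is some $T' \in \calI_p$ with $T \subset T'$. Restricting the identity above from the indexing set $T'$ to the smaller subset $T$ shows that the tuples $\bigl(u_{\s, p}(\bfm)\bigr)_{\s \in T}$ and $\bigl(u_{\s, p}(\widetilde{\bfm})\bigr)_{\s \in T}$ differ by the single common scalar $\lambda_{p, T'} \in K^{*}$, and hence represent the same point of $\PP^{\#T - 1}$. Packaging this identity over all pairs $(p, T)$ with $T \in \calJ_p$ yields $\bfF_\calJ(\bfm) = \bfF_\calJ(\widetilde{\bfm})$.

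The lemma therefore presents no genuine obstacle: it is really the tautological observation that projectivizing a subtuple factors through projectivizing the full tuple, so finer partitions can only carry less information about $\bfm$ than coarser ones. I expect its role later to be a convenience — it allows one to freely refine the partitions $\calI_p$ (for instance, to break off pieces on which the Evertse--Schlickewei--Schmidt theorem can be applied directly) while preserving any injectivity statement one has already established for $\bfF_\calI$.
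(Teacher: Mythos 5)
Your proposal is correct and matches the paper's own proof essentially verbatim: both arguments unpack equality in each factor $\PP^{\#T'-1}$ as the existence of a scalar $\l\in K^*$ with $u_{\s,p}(\bfm)=\l\, u_{\s,p}(\widetilde\bfm)$ for all $\s\in T'$, and then restrict that identity to the subset $T\subset T'$. Nothing further is needed.
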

\begin{proof}
Let~$p\in\calP_{r,n}$ and
let~$T\in\calJ_p$. Then there is a~$T'\in\calI_p$ with $T\subset T'$. 
The assumption that $\bfF_\calI(\bfm) = \bfF_\calI(\widetilde\bfm)$ means that
here is a~$\l\in K^*$ such that
\begin{equation}
  \label{eqn:usmlustildem}
  u_{\s,p}(\bfm)=\l u_{\s,p}(\widetilde\bfm)
  \quad\text{for all $\s\in T'$.}
\end{equation}
In particular, the equality~\eqref{eqn:usmlustildem} holds for all $\s\in T$,
since $T\subset T'$. Hence
\[
  \bigl[u_\s(\bfm)\bigr]_{\s\in T}=  \bigl[u_\s(\widetilde\bfm)\bigr]_{\s\in T},
\]
and since this is true for all~$T\in\calJ_p$, we conclude that
$\bfF_\calJ(\bfm) = \bfF_\calJ(\widetilde\bfm)$.
\end{proof}

We now resume writing~$\bfF$ for the map~\eqref{eqn:usmtoTIPK}, i.e.,
we drop the~$\calI$ subscript. Suppose that
\[
  \bfF(\bfm)=\bfF(\widetilde\bfm).
\]
This is equivalent to the statement that
\[
  u_{\s,p}(\bfm)u_{\t,p}(\widetilde\bfm) = u_{\t,p}(\bfm)u_{\s,p}(\widetilde\bfm)
  \qquad
  \text{for all $p\in\calP_{r,n}$, all $T\in\calI_p$, and all $\s,\t\in T$.}
\]
Replacing the~$u_{\s,p}$ with their expressions as products of~$\a_i$, this
becomes
\begin{equation}
  \label{eqn:prodaiksietc}
  \prod_{i=0}^n \a_{p(i)}^{k_{\s(i)}+\tilde k_{\t(i)}-k_{\t(i)}-\tilde k_{\s(i)}} = 1
  \quad
  \text{for all $p\in\calP_{r,n}$, all $T\in\calI_p$, and all $\s,\t\in T$.}
\end{equation}

We now invoke the assumption that the relation set for the point~$P$
is trivial, i.e.,
\[
  \framebox{$\Relations(P) = \{\bfzero\}$.}
\]
We note that
\[
  \sum_{i=0}^n \bigl(k_{\s(i)}-k_{\t(i)}\bigr)
  =   \sum_{i=0}^n \bigl(\tilde k_{\t(i)}-\tilde k_{\s(i)}\bigr) = 0,
\]
so the assumption $\Relations(P) = \{\bfzero\}$ implies that
the exponents in~\eqref{eqn:prodaiksietc} all vanish. Hence
\[
  k_{\s(i)}-k_{\t(i)} = \tilde k_{\t(i)}-\tilde k_{\s(i)}
  \quad
  \text{for all $p\in\calP_{r,n}$, all $T\in\calI_p$, 
          all $\s,\g\in T$, and all $0\le i\le n$.}
\]
Rewriting using $k_i=k_i(\bfm)=d^{m_i}$ yields
\begin{multline}
  \label{eqn:dmsidmtieqtilde}
  d^{m_{\s(i)}} - d^{m_{\t(i)}} =   d^{\tilde m_{\s(i)}} - d^{\widetilde m_{\t(i)}}\\
  \text{for all $p\in\calP_{r,n}$, all $T\in\calI_p$, 
          all $\s,\g\in T$, and all $0\le i\le n$.}
\end{multline}
Since the entries of~$\bfm$ are distinct, as are the entries
of~$\widetilde\bfm$, we can use Lemma~\ref{lemma:dadbeqdxdy} to deduce
that
\begin{equation}
  \label{eqn:sinetiimpliesmseqx}
  \s(i)\ne\t(i)
  \qquad\Longrightarrow\qquad
  m_{\s(i)}=\widetilde m_{\s(i)}
  \quad\text{and}\quad
  m_{\t(i)}=\widetilde m_{\t(i)}.
\end{equation}
This holds for all $0\le i\le n$, all $p\in\calP_{r,n}$, all
$T\in\calI_p$, and all $\s,\t\in T$.  It will be more convenient to
use the contrapositive of~\eqref{eqn:sinetiimpliesmseqx}. So suppose
that~$m_t\ne\widetilde m_t$ for some~$t$. Taking~$i=\s^{-1}(t)$ for
some~$\s\in T$, we deduce that
\[
  m_t\ne\widetilde m_t
  \quad\Longrightarrow\quad
  \s\bigl(\s^{-1}(t)\bigr)=\t\bigl(\s^{-1}(t)\bigr).
\]
The conclusion may be rewritten as $\t^{-1}(t)=\s^{-1}(t)$, so we have
proven the following key implication:
\begin{equation}
  \label{eqn:sinetiimpliesmseq}
  m_t\ne\widetilde m_t
  \quad\Longrightarrow\quad
  \forall p\in\calP_{r,n},\;
  \forall T\in\calI_p,\;
  \forall \s,\t\in T,\quad
  \t^{-1}(t)=\s^{-1}(t).
\end{equation}
\par
We would like to show that~\eqref{eqn:sinetiimpliesmseq} gives enough
relations to force~$\bfm=\widetilde\bfm$, but we will need to use
the super-spanning assumption, i.e., the assumption that~$A_\bfm$
and~$A_{\widetilde\bfm}$ have super-rank~$r$, to eliminate some
exceptional cases for which there are not enough relations.

\begin{definition}
For each $0\le t\le r$ and each $0\le j\le r$, we define sets
\[
  T_{j}^t = \{ \s\in\calS_{r+1} : \s(j) = t \}.
\]
For each~$t$, this gives a partition
\[
  \calI^t_{\bullet} = \{T_{0}^t,T_{1}^t,\ldots,T_{r}^t\}
\]
of~$\calS_{r+1}$. We will say that a partition of~$\calS_{r+1}$ is
\emph{exceptional} if it is a subpartition of~$\calI^t_{\bullet}$ for
some $0\le t\le r$.  In particular, each partition~$\calI^t_{\bullet}$
is itself exceptional.
\end{definition}

\begin{lemma}
\label{lemma:nonexceptinject}
Suppose that for some~$p\in\calP_{r,n}$, the partition~$\calI_p$ is
not exceptional.  Then the map~$\bfF$ defined by~\eqref{eqn:usmtoTIPK}
is injective.
\par
More precisely, if $\bfF(\bfm)=\bfF(\widetilde\bfm)$ with
$m_t\ne\widetilde m_t$, then every~$\calI_p$ is a subpartition
of~$\calI_{\bullet}^t$.
\end{lemma}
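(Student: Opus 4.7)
The plan is to derive the conclusion directly from the key implication~\eqref{eqn:sinetiimpliesmseq}, which was established just above the lemma using the trivial relation-set hypothesis $\Relations(P)=\{\bfzero\}$ together with Lemma~\ref{lemma:dadbeqdxdy}. First I would assume $\bfF(\bfm)=\bfF(\widetilde\bfm)$ and that there exists an index $t$ with $m_t\neq\widetilde m_t$; the goal is then to conclude that every $\calI_p$ is a subpartition of $\calI_{\bullet}^t$. This gives both parts of the lemma at once: the ``more precisely'' statement directly, and the main statement by contrapositive (if some $\calI_p$ is not exceptional, no such $t$ can exist, so $\bfm=\widetilde\bfm$).

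The central step is a purely combinatorial repackaging of~\eqref{eqn:sinetiimpliesmseq}. Fix $p\in\calP_{r,n}$ and a block $T\in\calI_p$, and pick any $\s_0\in T$. Set $j=\s_0^{-1}(t)$, which is a well-defined element of $\{0,1,\ldots,r\}$ since $\s_0$ is a permutation of that set. The implication~\eqref{eqn:sinetiimpliesmseq} says that for every $\s,\t\in T$ one has $\s^{-1}(t)=\t^{-1}(t)$; applied with $\t=\s_0$ this forces $\s^{-1}(t)=j$, i.e., $\s(j)=t$, for every $\s\in T$. By the definition of $T_{j}^{t}$ this says $T\subseteq T_{j}^{t}$. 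Since $T$ was an arbitrary block of $\calI_p$, we conclude that $\calI_p$ refines the partition $\calI_{\bullet}^{t}=\{T_{0}^{t},\ldots,T_{r}^{t}\}$, i.e., $\calI_p$ is a subpartition of $\calI_{\bullet}^{t}$. Because $p$ was arbitrary, this holds for every $p\in\calP_{r,n}$, proving the ``more precisely'' assertion and, by contraposition, the injectivity assertion.

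I do not expect any serious obstacle: all of the heavy analytic machinery---the Evertse--Schlickewei--Schmidt bound, the arithmetic lemma on $d^a-d^b=d^x-d^y$, and the use of $\Relations(P)=\{\bfzero\}$ to kill the exponents in~\eqref{eqn:prodaiksietc}---has already been invested in deriving~\eqref{eqn:sinetiimpliesmseq}. The only subtlety is making sure that the same value of $t$ controls the subpartition relation for all $p$ simultaneously, but this is automatic because the choice of $t$ is dictated by the pair $(\bfm,\widetilde\bfm)$ rather than by $p$. Thus the lemma is a bookkeeping consequence of the work already done, and the substantive case analysis---what happens when $\calI_p$ \emph{is} exceptional for some $t$---is precisely what must be taken up next, using the super-rank hypothesis on $A_{\bfm}$ and $A_{\widetilde\bfm}$ to rule out the remaining scenarios.
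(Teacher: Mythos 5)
Your proposal is correct and follows essentially the same route as the paper: both deduce from the key implication~\eqref{eqn:sinetiimpliesmseq} that $\s^{-1}(t)$ is constant on each block $T\in\calI_p$, hence $T\subseteq T_{j(T)}^t$ and $\calI_p$ refines $\calI_{\bullet}^t$, with the main claim following by contraposition. You also correctly flag the one subtlety the paper itself emphasizes, namely that the index $t$ is determined by the pair $(\bfm,\widetilde\bfm)$ and so is independent of $p$ and $T$.
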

\begin{proof}
We prove the second statement, so we assume that
\[
  \bfF(\bfm)=\bfF(\widetilde\bfm)
  \quad\text{and}\quad
  m_t\ne\widetilde m_t,
\]
and we will prove that~$\calI_p$ is a subpartition
of~$\calI^t_{\bullet}=\{T_{0}^t,\ldots,T_{n}^t\}$.
\par
Let $T\in\calI_p$ be any set in the partition, and let~$\s,\t\in T$.  
Applying~\eqref{eqn:sinetiimpliesmseq}, we conclude that
\[
  \t^{-1}(t)=\s^{-1}(t).
\]
Hence the set
\[
  \bigl\{\s^{-1}(t) : \s\in T\bigr\}
\]
contains only one number, which we denote by~$j(T)$. In other words,
\[
  \s\bigl(j(T)\bigr) = t \quad\text{for all $\s\in T$,}
\]
so by definition of~$T_j^t$, this means that~$T\subset T_{j(T)}^t$,
where we stress that the index~$t$ does not depend on~$p$ or~$T$. Thus
every~$T\in\calI_p$ is contained in one of the sets in the
partition~$\{T_{0}^t,\ldots,T_{n}^t\}$,
so~$\calI_p\subset\calI_{\bullet}^t$.  In particular,~$\calI_p$ is an
exceptional partition, which completes the proof of
Lemma~\ref{lemma:nonexceptinject}.
\end{proof}

\begin{remark}
If any~$\calI_p$ is not exceptional, then
Lemma~\ref{lemma:nonexceptinject} says that~$\bfF$ is injective.  We
now indicate how exceptional partitions~$\calI_p$ can lead to~$\bfF$
being non-injective.  From Lemma~\ref{lemma:subpartition}, it suffices
to look at the case that~$\calI_p\subset\calI_{\bullet}^t$ for all~$p$
and some~$t$, since if these lead to non-injective maps~$\bfF$, then
so do their subpartitions.
\par
For~$\s\in T_{j}^t$, we compute 
\begin{equation}
  \label{eqn:usmajktprod}
  u_{\s,p}(\bfm)
  = \prod_{i=0}^r \a_{p(i)}^{k_{\s(i)}}
  = \a_{p(j)}^{k_t} \prod_{\substack{i=0\\i\ne j\\}}^r \a_{p(i)}^{k_{\s(i)}}
  = \a_{p(j)}^{k_t} \prod_{\substack{i=0\\i\ne t\\}}^r \a_{p(\s^{-1}(i))}^{k_i},
\end{equation}
where the last equality uses that fact that $\s(j)=t$ for all $\s\in
T_j^t$.  The quantity $\a_{p(j)}^{k_t}$ does not depend on~$\s$, so it
may be canceled from homogeneous coordinates, yielding
(where we recall that $k_i=d^{m_i}$)
\begin{equation}
  \label{eqn:usmsTjtdndt}
  \bigl[u_{p,\s}(\bfm)\bigr]_{\s\in T_j^t} 
  = \left[\prod_{\substack{i=0\\i\ne t\\}}^n \a_{p(\s^{-1}(i))}^{d^{m_i}}\right]_{\s\in T_j^t}.
\end{equation}
Note that this formula for $\bigl[u_{p,\s}(\bfm)\bigr]_{\s\in T_j^t}$
does not involve~$m_t$.  Hence $\bfF(\bfm)$ does not depend on the
$t^{\text{th}}$-coordinate of~$\bfm$.  Thus~$\bfF$ is not injective if
we consider it to be a map on the set of all integer vectors
$\bfm=(m_0,\ldots,m_r)$. We will need to use the assumption
that~$\bfm\in\calM_P$, i.e., that~$A_\bfm$ has super-rank equal
to~$r$, to rule out with this potential non-injectivity.
\end{remark}

We resume the proof of Theorem~\ref{theorem:finmanyH}, so in
particular assuming that~$\calR(P)=\{\bfzero\}$. Recall that for each
$p\in\calP_{r,n}$ we have fixed a partition~$\calI$ of~$\calS_{r+1}$
and used it in~\eqref{eqn:usmtoTIPK} to define a map~$\bfF$.  As
indicated earlier, it suffices to prove that the
set~\eqref{eqn:usmcapVI} is finite, and for this it suffices to prove
that the map~$\bfF$ is injective. Lemma~\ref{lemma:nonexceptinject}
says that~$\bfF$ is injective unless every~$\calI_p$ is exceptional
for the same index~$t$, i.e., unless there is an index~$t$ such
every~$\calI_p$ is contained in~$\calI_{\bullet}^t$. We deal with
these exceptional cases in the following lemma, which says that in
these cases, the set~\eqref{eqn:usmcapVI} is empty.

\begin{lemma}
\label{lemma:exceptsprrnk}
Suppose that there is an index~$t$ such that $\calI_p\subset\calI_{\bullet}^t$
for every $p\in\calP_{r,n}$. Let
$\bfm=(m_0,\ldots,m_n)\in\ZZ^{n+1}$ with $0\le m_0<m_1<\cdots<m_n$
have the property that 
\begin{equation}
  \label{eqn:usminVI}
  \bigl(u_{\s,p}(\bfm)\bigr)_{\s\in\calS_{r+1}} \in \calV_{\calI_p}
  \qquad\text{for all $p\in\calP_{r,n}$.}
\end{equation}
Then $\bfm\notin\calM_P$, i.e., the matrix $A_\bfm$ does not have
super-rank equal to~$r$. More precisely, if we delete the~$t^{\text{th}}$~row
of~$A_\bfm$, the resulting matrix has rank~\text{$r-1$}.
\end{lemma}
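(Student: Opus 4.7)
The plan is to convert the block-wise vanishing subsum hypotheses into the vanishing of every $r \times r$ minor of $A_\bfm^{(t)}$ --- the $r \times (n+1)$ matrix obtained from $A_\bfm$ by deleting row $t$ --- via the Laplace expansion of each $(r+1) \times (r+1)$ submatrix $A_{\bfm,p}$ of $A_\bfm$ along its $t^{\text{th}}$ row.

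First, fix $p \in \calP_{r,n}$ and $j \in \{0, \ldots, r\}$. Since $\calI_p$ refines $\calI_\bullet^t = \{T_0^t, \ldots, T_r^t\}$, the blocks of $\calI_p$ contained in $T_j^t$ partition $T_j^t$; summing the vanishing identities $\sum_{\s \in T}\sgn(\s) u_{\s,p}(\bfm) = 0$ (one per such block, guaranteed by $(u_{\s,p}(\bfm))_{\s \in \calS_{r+1}} \in \calV_{\calI_p}$) over them yields
\[
\sum_{\s \in T_j^t}\sgn(\s) u_{\s,p}(\bfm) = 0.
\]
I would then identify this sum with a single cofactor: by \eqref{eqn:usmajktprod} the factor $\a_{p(j)}^{k_t}$ splits off from $u_{\s,p}(\bfm)$ for every $\s \in T_j^t$, and the remaining sum $\sum_{\s \in T_j^t}\sgn(\s)\prod_{i \ne j}\a_{p(i)}^{k_{\s(i)}}$ equals $(-1)^{t+j}\det B_{p,j,t}$ by the standard Laplace (cofactor) identity, where $B_{p,j,t}$ is the $r \times r$ submatrix of $A_{\bfm,p}$ obtained by deleting row $t$ and column $j$. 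Since $\a_{p(j)}^{k_t} \ne 0$, this forces $\det B_{p,j,t} = 0$ for every $p \in \calP_{r,n}$ and every $j \in \{0, \ldots, r\}$.

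To finish, I observe that every $r \times r$ minor of $A_\bfm^{(t)}$ arises as some $B_{p,j,t}$: given any $r$ columns $c_0 < c_1 < \cdots < c_{r-1}$ of $A_\bfm^{(t)}$, choose an additional column $c \in \{0, \ldots, n\} \setminus \{c_0, \ldots, c_{r-1}\}$ (possible because $r \le n$), let $p \in \calP_{r,n}$ enumerate $\{c_0, \ldots, c_{r-1}, c\}$ in increasing order, and let $j$ satisfy $p(j) = c$. Hence all $r \times r$ minors of $A_\bfm^{(t)}$ vanish, so $\rank A_\bfm^{(t)} \le r - 1$, $A_\bfm$ fails to have super-rank $r$, and $\bfm \notin \calM_P$. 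For the sharper assertion $\rank A_\bfm^{(t)} = r - 1$, I would combine this with $\rank A_\bfm \le r$ (immediate, since the cofactor identities sum over $j$ to $\det A_{\bfm,p} = 0$ for every $p$) and a separate argument that $\rank A_\bfm = r$ --- using the non-vanishing of every proper subsum in each $T \in \calI_p$ together with the standing assumption $\Relations(P) = \{\bfzero\}$ --- after which removing one row lowers the rank by exactly one. The only real obstacle is the sign bookkeeping in the cofactor identification $\sum_{\s \in T_j^t}\sgn(\s)u_{\s,p}(\bfm) = (-1)^{t+j}\a_{p(j)}^{k_t}\det B_{p,j,t}$; once indices and signs are carefully matched, everything else is purely combinatorial.
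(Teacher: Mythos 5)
Your proposal is correct and follows essentially the same route as the paper: sum the vanishing identities over the blocks of $\calI_p$ inside each $T_j^t$ to reduce to the maximal exceptional partition, factor out $\a_{p(j)}^{k_t}$, recognize the remaining sum as (a sign times) the $r\times r$ minor of $A_\bfm$ with row $t$ and column $p(j)$ deleted, and conclude that all $r\times r$ minors of the row-deleted matrix vanish, contradicting super-rank $r$. Your explicit check that every $r\times r$ minor arises from some pair $(p,j)$, and your remark that the cofactor sign is irrelevant for the vanishing conclusion, are both consistent with (indeed slightly more careful than) the paper's argument.
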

\begin{proof}  
Assumption~\eqref{eqn:usminVI} and the definition of~$\calV_{\calI_p}$
say that
\begin{equation}
  \label{eqn:sumsTusm0}
  \sum_{\s\in T} \sgn(\s) u_{\s,p}(\bfm)=0
  \quad\text{for all $T\in\calI_p$.}
\end{equation}
We are assuming that~$\calI_p\subset\calI_\bullet^t$, so
each~$T'\in\calI_\bullet^t$ is a union of elements of~$\calI_p$.
Summing~\eqref{eqn:sumsTusm0} over the~$T$ whose union is~$T'$, we
find that~\eqref{eqn:sumsTusm0} is true for the
partition~$\calI_\bullet^t$. Thus it suffices to prove the
lemma under the assumption that~$\calI_p$ is equal to the maximal
exceptional partition~$\calI_\bullet^t$.
\par
As computed earlier, see~\eqref{eqn:usmajktprod}, we have
\[
  u_\s(\bfm) = \a_{p(j)}^{k_t}\prod_{i\ne j} \a_{p(i)}^{k_{\s(i)}}
  \qquad\text{for all $\s\in T_j^t$.}
\]
Hence in the sum~\eqref{eqn:sumsTusm0} with $T=T_j^t$, we can
cancel~$\a_{p(j)}^{k_t}$ from every term, which yields
\[
  \sum_{\s\in T_j^t} \sgn(\s) \prod_{i\ne j} \a_{p(i)}^{k_{\s(i)}} = 0.
\]
We observe that this sum is exactly the determinant of the matrix
obtained by deleting the $t^{\text{th}}$~row and the
$p(j)^{\text{th}}$~column from the matrix~$A_{\bfm,p}$. (This is because the
sum consists of the terms for which~$i$ is never equal to~$p(j)$
and~$\s(i)$ is never equal to~$t$.)  
\par
The value of~$t$ is fixed, but~$p$ and~$j$ are arbitrary, so we have
proven that if we delete the $t^{\text{th}}$~row of~$A_\bfm$, then the
resulting \text{$r$-by-$(n+1)$} dimensional matrix has rank at
most~$r-1$, since all of its \text{$r$-by-$r$} minors vanish. By
the definition of super-rank, it follows that~$A_\bfm$ does not have
super-rank~$r$, which completes the proof of
Lemma~\ref{lemma:exceptsprrnk}
\end{proof}  

We now summarize how the preceding pieces fit together to prove
Theorem~\ref{theorem:finmanyH}.

\begin{enumerate}
\setlength{\itemsep}{0pt}
\item
Our goal is to prove that the set of exceptional $(r-1)$-dimen\-sional
linear subspaces~$\calL_{\f,P}^r$ is finite.
\item
Lemma~\ref{lemma:MtoHsurjective} says that the set $\calM_P$ of
$(r+1)$-tuples such that~$A_\bfm$ has super-rank~$r$ maps
onto~$\calL_{\f,P}^r$, so it suffices to prove that~$\calM_P$ is
finite.
\item
For each~$p\in\calP_{r,n}$, i.e., for each choice of an
$(r+1)$-by-$(r+1)$ minor of the matrix~$A_\bfm$, we fix a
partition~$\calI_p$ of~$\calS_{r+1}$ that describes the minimal
subsums of the determinant~$\det A_{\bfm,p}$ that vanish.
\item
The terms in the expansion of~$\det A_{\bfm,p}$ lie in a finitely
generated subgroup of~$K^*$, so Theorem~\ref{theorem:Suniteqn} says
that there are only finitely many possibilities for the terms in each
subsum, where the terms are viewed as a point in projective space.
\item
This implies that the image of the map~$\bfF$ defined
by~\eqref{eqn:usmtoTIPK} is finite, so it suffices to show that~$\bfF$
is injective for all choices of the partitions~$\calI_p$.
\item
Lemma~\ref{lemma:nonexceptinject} says that the map~$\bfF$ is
injective unless there is an index~$t$ such that every~$\calI_p$ is a
subpartition of the exceptional partition~$\calI_{\bullet}^t$.  (A key
point here is that one~$t$ works for every~$p$.) 
\item
It remains to deal with the case that there is an index~$t$ such that
every~$\calI_p$ is a subpartition of~$\calI_{\bullet}^t$. But in this
case, Lemma~\ref{lemma:exceptsprrnk} says that the associated
$(r+1)$-tuples~$\bfm$ give matrices~$A_\bfm$ that have rank~$r-1$ when
their~$t^{\text{th}}$~rows are deleted. Thus~$A_\bfm$ does not have
super-rank~$r$, so~$\bfm\notin\calM_P$.
\end{enumerate}

This completes the proof of Theorem~\ref{theorem:finmanyH}.
\end{proof}

%%%%%%%%%%%%%%%%%%%%%%%%%%%%%%%%%%%%%%%%%%%%%%%%%%%%%%%%%%%%%%%%%%%%%%%%%%%%%
\section{The size of the intersection $\OO_\f(P)\cap L$}  %%%%%%%%%%%%%%%%
\label{section:sizeintersection}
%%%%%%%%%%%%%%%%%%%%%%%%%%%%%%%%%%%%%%%%%%%%%%%%%%%%%%%%%%%%%%%%%%%%%%%%%%%%%

In this section we give the proof of Corollary~$\ref{cor:maincor}$,
which we restate for the convenience of the reader

\begin{cor}
Let $\f$ and $P$ be as in the statement of
Theorem~$\ref{theorem:mainthm}$.  Then for any linear subspace $L$,
the intersection \text{$\OO_{\f}(P)\cap L$} is finite, and its size is
bounded solely in terms of~$n$, independent of $P$ and $d$.
\end{cor}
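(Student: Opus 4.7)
The plan is to induct on $r:=\dim L+1$, proving a uniform bound $B_r(n)$ on $|\Orbit_\f(P)\cap L|$ for all linear subspaces $L$ of dimension $r-1$. The base cases $r\le 1$ are immediate, since such an $L$ meets the orbit in at most one point. For $r\ge 2$, set $S:=\Orbit_\f(P)\cap L$; if $S$ fails to span $L$, then $S=\Orbit_\f(P)\cap L'$ for $L':=\mathrm{span}(S)\subsetneq L$, and induction on $\dim L'<r-1$ supplies the bound.

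Assume therefore that $S$ spans $L$, and pick a basis $Q_1,\ldots,Q_r\in S$ of $L$. If no $(r+1)$-subset of $S$ super-spans $L$, then for each additional point $Q\in S\setminus\{Q_1,\ldots,Q_r\}$ the $(r+1)$-subset $\{Q_1,\ldots,Q_r,Q\}$ fails to super-span $L$; since the basis already spans $L$, the failing $r$-subset must contain $Q$, so $Q$ lies in one of the $r$ subspaces $L^{(i)}:=\mathrm{span}(Q_1,\ldots,\widehat{Q_i},\ldots,Q_r)$ of dimension $r-2$. Induction then yields $|S|\le r+r\cdot B_{r-1}(n)$.

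The remaining, and main, case is when some subset $\{Q_1',\ldots,Q_{r+1}'\}\subseteq S$ already super-spans $L$. The key point is that the proof of Theorem~\ref{theorem:finmanyH} gives a uniform bound $M(n)$ on $|\calM_P|$ itself, not just on its image $\Linearspaces^r_{\f,P}$. Every $Q\in S\setminus\{Q_1',\ldots,Q_{r+1}'\}$ either lies in one of the $\binom{r+1}{2}$ subspaces $L_{i,j}':=\mathrm{span}(Q_1',\ldots,\widehat{Q_i'},\ldots,\widehat{Q_j'},\ldots,Q_{r+1}')$ of dimension $r-2$ --- contributing at most $\binom{r+1}{2}B_{r-1}(n)$ points by induction --- or else a short check shows that substituting $Q$ for $Q_i'$ produces, for each $i\in\{1,\ldots,r+1\}$, a new super-spanning $(r+1)$-subset of $S$; these $r+1$ subsets are distinct, as are the collections arising from distinct $Q$'s. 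Each such subset corresponds to a distinct element of $\calM_P$ that maps to $L$, so the number of these ``generic'' points is at most $M(n)/(r+1)$. Combining gives $B_r(n)\le (r+1)+\binom{r+1}{2}B_{r-1}(n)+M(n)/(r+1)$, which by induction depends only on $n$.

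The main obstacle is the bookkeeping in this last case: one must identify precisely which $r$-subsets of the substituted tuples span $L$ (this requires $Q\notin L_{i,j}'$), verify distinctness of the resulting $(r+1)$-subsets across both $i$ and $Q$, and exploit the bound on $|\calM_P|$ from the proof of Theorem~\ref{theorem:finmanyH} rather than merely its cardinality consequence for $\Linearspaces^r_{\f,P}$.
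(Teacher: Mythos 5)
Your proof is correct and follows the same strategy as the paper: both reduce the corollary to the fact that the proof of Theorem~\ref{theorem:finmanyH} bounds $\#\calM_P$ itself (not merely $\#\Linearspaces^r_{\f,P}$), so that a large intersection $\OO_\f(P)\cap L$ would force too many super-spanning tuples. Your induction on $\dim L$, with the case analysis that either extracts many distinct super-spanning $(r+1)$-subsets from $\OO_\f(P)\cap L$ or pushes degenerate configurations into lower-dimensional subspaces, supplies the combinatorial details that the paper's very terse proof leaves implicit.
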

\begin{proof}
Let us return to the map $\calM_P\longrightarrow \calL_{\f,P}^r$
defined in Lemma~\ref{lemma:MtoHsurjective}.  Fix a linear space $L$
in the codomain, so 
\[
  \#\left(L\cap\OO_\f(P)\right)\geq r + 1.
\]
Then the preimage of $L$ under this map, i.e.,
\[
  \calM_P(L) := \left\{\bfm \in\calM_P : L_{\bfm} = L\right\},
\]
consists of strictly increasing $(r + 1)$-tuples $\bfm$ such that
\[
  \bigl\{\phi^{m_0}(P),\ldots,\phi^{m_r}(P) \bigr\} \subseteq L\cap\OO_\f(P).
\]
Thus a bound on $\#\calM_P$ solely in term of $n$ gives an analogous
bound on \text{$\#\left(L\cap\OO_\f(P)\right)$} for any linear space
$L$. Using this fact, Corollary~\ref{cor:maincor} follows from the proof of
Theorem~\ref{theorem:finmanyH}.
\end{proof}

%%%%%%%%%%%%%%%%%%%%%%%%%%%%%%%%%%%%%%%%%%%%%%%%%%%%%%%%%%%%%%%%%%%%%%%%%%%%%%%%
\section{Orbits that are not Zariski dense} %%%%%%%%%%%%%%%%%%%%%%%%%%%%%%%%%%%%
%%%%%%%%%%%%%%%%%%%%%%%%%%%%%%%%%%%%%%%%%%%%%%%%%%%%%%%%%%%%%%%%%%%%%%%%%%%%%%%%

It is interesting to ask if we can weaken the hypotheses of
Conjecture~\ref{conjecture:finmanylinsp}.  This is already a
nontrivial question when $\phi$ is the $d^{\textup{th}}$-power map, in
which case we ask what happens if we allow non-trivial multiplicative
relations among the coordinates of of~$P$.  We now show that the
uniformity part of conjecture fails, i.e., for fixed~$d$ and~$n$, the
number of exceptional subspaces may grow as the point~$P$ varies.
Although this example is somewhat artificial, it shows that some
condition on~$P$ or its orbit is needed if one is to drop the
assumption in Conjecture~\ref{conjecture:finmanylinsp}
that~$\calO_\f(P)$ be Zariski dense in~$\PP^n$.

\begin{proposition}
\label{proposition:manyexcepthyps}
Let $\phi_d\colon\PP^n\to\PP^n$ be the $d^{\textup{th}}$-power
map~\eqref{eqn:dthpowermap}, let~$\ell\ge3$ be a prime such that~$d$ is a
primitive root modulo~$\ell$, let~$\zeta_\ell$ denote a
primitive~$\ell^{\text{th}}$-root of unity, let
\[
  P = [1,\zeta_\ell,\a_2,\ldots,\a_n]
\]
be a point with~$\a_2,\ldots,\a_n$ multiplicatively independent, and
let~$V$ be the \textup(reducible\textup) hypersurface
\[
  V = \left\{\bfx\in\PP^n :  \frac{x_1^{\ell} - x_0^{\ell}}{x_1-x_0}=0 \right\}.
\]
Then for every~$0<i<\ell$, the hyperplane
\[
  H_i = \{\bfx\in\PP^n : x_1=\zeta_\ell^ix_0\} \subset \PP^n
\]
is an exceptional hyperplane for the map~$\f_d$.
\end{proposition}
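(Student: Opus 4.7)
The plan is to exhibit, for each fixed $i$ with $0<i<\ell$, a set of $n+1$ iterates of $\phi_d$ from $P$ lying in $H_i$ that super-span $H_i$. The first step is to describe the iterates in $H_i$. Since $\phi_d^m(P)=[1,\zeta_\ell^{d^m},\alpha_2^{d^m},\ldots,\alpha_n^{d^m}]$, membership in $H_i$ is equivalent to $d^m\equiv i\pmod{\ell}$; using the hypothesis that $d$ is a primitive root modulo $\ell$, this congruence defines a single residue class modulo $\ell-1$, giving an infinite arithmetic progression $\{m_i+k(\ell-1):k\ge0\}$ of iterates landing in $H_i$, where $m_i$ is the smallest solution.

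The second step is to reduce to a lower-dimensional problem where Corollary~\ref{cor:maincor} applies. Setting $c=d^{m_i}$ and $q=d^{\ell-1}$, and identifying $H_i$ with $\PP^{n-1}$ via the projection that drops the coordinate $x_1$ (which is determined by $x_0$ on $H_i$), the iterate $\phi_d^{m_i+k(\ell-1)}(P)$ corresponds to $\psi^k(Q_0)\in\PP^{n-1}$, where $\psi$ is the $q^{\text{th}}$-power map on $\PP^{n-1}$ and $Q_0=[1,\alpha_2^c,\ldots,\alpha_n^c]$. A direct check using the constraint $\sum e_i=0$ in the definition of the relation set, combined with the multiplicative independence of $\alpha_2,\ldots,\alpha_n$, shows that $\Relations(Q_0)=\{\bfzero\}$. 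Hence Corollary~\ref{cor:maincor} applies to $\psi$ and $Q_0$, and every proper linear subspace of $\PP^{n-1}$ meets $\Orbit_\psi(Q_0)$ in only finitely many points.

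Given this finiteness, constructing a super-spanning configuration is elementary. The iterates $\psi^k(Q_0)$ are pairwise distinct because $\alpha_2$ is not a root of unity, so $\Orbit_\psi(Q_0)$ is infinite; since no proper subspace contains infinitely many orbit points, the orbit spans $\PP^{n-1}$, and one can extract $n$ iterates $\psi^{k_0}(Q_0),\ldots,\psi^{k_{n-1}}(Q_0)$ that are linearly independent. For each $j\in\{0,\ldots,n-1\}$ let $\Lambda_j\subset\PP^{n-1}$ denote the hyperplane spanned by the remaining $n-1$ of these iterates; each $\Lambda_j$ contains only finitely many orbit points, so some iterate $\psi^{k_n}(Q_0)$ lies outside $\bigcup_{j=0}^{n-1}\Lambda_j$. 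Any $n$-subset of $\{\psi^{k_0}(Q_0),\ldots,\psi^{k_n}(Q_0)\}$ then spans $\PP^{n-1}$: the subset omitting $\psi^{k_n}(Q_0)$ spans by construction, while a subset omitting $\psi^{k_j}(Q_0)$ with $j<n$ consists of the $n-1$ iterates spanning $\Lambda_j$ together with a point off $\Lambda_j$. Pulling back to $H_i\subset\PP^n$ yields $n+1$ iterates of $\phi_d$ in $H_i$ that super-span $H_i$, so $H_i\in\Linearspaces^n_{\phi_d,P}$.

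The main conceptual step is the reduction in the second paragraph: once one recognizes the iterates in $H_i$ as the $\psi$-orbit of a point in $\PP^{n-1}$ with trivial relation set, Corollary~\ref{cor:maincor} becomes available and the construction of a super-spanning configuration reduces to avoiding finitely many proper subspaces inside an infinite orbit.
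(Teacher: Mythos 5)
Your proof is correct, but it reaches the conclusion by a route genuinely different from the paper's. The first step---identifying the iterates landing in $H_i$ as an arithmetic progression modulo $\ell-1$ via the primitive-root hypothesis---is identical. From there the paper argues softly: the multiplicative independence of $\alpha_2,\ldots,\alpha_n$ makes $\Orbit_{\phi_d}(P)\cap H_i$ Zariski dense in $H_i$, and a Zariski dense subset of $H_i$ automatically contains an $(n+1)$-tuple that super-spans $H_i$, since super-spanning is a nonempty open condition on tuples. You instead recognize the iterates in $H_i$ as the forward orbit of $Q_0=[1,\alpha_2^c,\ldots,\alpha_n^c]$ under the $d^{\ell-1}$-power map on $H_i\cong\PP^{n-1}$, verify $\Relations(Q_0)=\{\bfzero\}$, invoke Corollary~\ref{cor:maincor} to get finiteness of $\Orbit_\psi(Q_0)\cap\Lambda$ for every proper subspace $\Lambda$, and then run an explicit avoidance argument to build the super-spanning tuple; that construction is sound. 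What each approach buys: yours stays entirely within the paper's own results and avoids the (unproved in the paper) classical fact that such orbits are Zariski dense in the torus, but it imports the full Evertse--Schlickewei--Schmidt machinery behind Corollary~\ref{cor:maincor} for what is merely an existence statement; the paper's argument is shorter and independent of the main theorem. One citation point worth fixing: Corollary~\ref{cor:maincor} is stated for points whose coordinates are multiplicatively independent, and $Q_0$ fails this because of its coordinate $1$; you should instead appeal to Theorem~\ref{theorem:finmanyH}, whose hypothesis is exactly the condition $\Relations(Q_0)=\{\bfzero\}$ that you verified (or note that the corollary's proof uses only that hypothesis).
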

\begin{proof}
It is clear that~$V$ is the union of the~$H_i$ for $0<i<\ell$.  We
also observe that
\[
  \f^n(P) = [1,\zeta_\ell^{d^n},\a^{d^n}_2,\ldots,\a^{d^n}_n] \in H_i
  \quad\Longleftrightarrow\quad
  d^n\equiv i\pmod{\ell}.
\]
Since~$d$ is a primitive root modulo~$\ell$, there is a unique
integer $0<n_i<\ell$ such that
\[
  d^n\equiv i\pmod{\ell}
  \quad\Longleftrightarrow\quad
  n \equiv n_i \pmod{\ell-1}.
\]
Thus each~$H_i$ contains infinitely many points of~$\Orbit_{\f_d}(P)$,
and the multiplicative independence of~$\a_2,\ldots,\a_n$ implies that
$\Orbit_{\f_d}(P)\cap H_i$ is Zariski dense in~$H_i$. Therefore~$H_i$
is an exceptional subspace for~$\f_d$ and~$P$.
\end{proof}

\begin{corollary}
If we drop the assumption in Theorem~$\ref{theorem:finmanyH}$ that
$\Relations(P)=0$, then there does not exist a bound for
$\#\calL_{\f,P}^n$ that depends only on~$n$ and~$d$, independent of
the point~$P$.
\end{corollary}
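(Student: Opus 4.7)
The plan is to show that for each positive integer $N$, one can exhibit a point $P \in \PP^n(\C)$ with nonzero (but multiplicatively dependent) coordinates such that $\#\calL_{\f_d,P}^n \geq N$; since $n$ and $d$ are fixed throughout, this rules out any bound depending only on $n$ and $d$.

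The starting point is Proposition~\ref{proposition:manyexcepthyps}: whenever $\ell \geq 3$ is a prime for which $d$ is a primitive root modulo $\ell$, the point $P = [1,\zeta_\ell,\a_2,\ldots,\a_n]$ with $\a_2,\ldots,\a_n$ multiplicatively independent admits $\ell - 1$ distinct exceptional hyperplanes, namely $H_1,\ldots,H_{\ell-1}$. If one is willing to invoke Artin's primitive root conjecture, then for $d$ not equal to $-1$ or a perfect square there are infinitely many such primes $\ell$, and the corollary follows immediately.

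To obtain an unconditional proof (and to handle degenerate cases such as $d$ a perfect square, for which Artin's conjecture fails outright), I would re-examine the proof of the proposition. The hypothesis that $d$ is a primitive root mod~$\ell$ was used only to guarantee that every $i \in \{1,\ldots,\ell-1\}$ lies in the cyclic subgroup $\langle d \rangle \subset (\Z/\ell\Z)^{*}$; without it, the computation $\f_d^{\,n}(P) \in H_i \iff d^n \equiv i \pmod \ell$ still shows that $H_i$ is exceptional whenever $i \in \langle d \rangle$. Consequently, with no hypothesis on $d$ beyond $\gcd(d,\ell)=1$, the same construction produces at least $e_\ell := \ord_\ell(d)$ exceptional hyperplanes.

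It therefore suffices to show that $e_\ell$ can be made arbitrarily large as $\ell$ varies over primes coprime to $d$. This is an elementary finiteness statement: for any fixed $N$, the integer $\prod_{k=1}^N(d^k-1)$ is nonzero and has only finitely many prime divisors, so all but finitely many primes $\ell$ satisfy $d^k \not\equiv 1 \pmod \ell$ for $1\le k \le N$, which forces $\ord_\ell(d) > N$. Selecting such an $\ell$ and building $P$ as above gives $\#\calL_{\f_d,P}^n \geq e_\ell > N$, completing the argument. The only delicate point is the avoidance of Artin's conjecture, handled precisely by replacing the phrase \emph{primitive root} with \emph{generator of a cyclic subgroup of large order}; everything else is bookkeeping on top of Proposition~\ref{proposition:manyexcepthyps}.
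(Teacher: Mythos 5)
Your proof is correct, but it takes a genuinely different (and in fact stronger) route than the paper. The paper's proof simply combines Proposition~\ref{proposition:manyexcepthyps} with the deep unconditional results of Gupta--Murty and Heath-Brown toward Artin's conjecture, which supply, for suitable~$d$, infinitely many primes~$\ell$ with~$d$ a primitive root modulo~$\ell$; as written, that argument only establishes the corollary for those integers~$d$ covered by the known cases of Artin's conjecture. You instead observe that the primitive-root hypothesis in the proposition is used only to guarantee that every residue $i$ lies in $\langle d\rangle\subset(\Z/\ell\Z)^{*}$, so that without it one still obtains $\ord_\ell(d)$ exceptional hyperplanes $H_i$ for $i\in\langle d\rangle$, and you then make $\ord_\ell(d)$ arbitrarily large by the elementary remark that the nonzero integer $\prod_{k=1}^{N}(d^k-1)$ has only finitely many prime divisors. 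This buys a completely elementary proof valid for \emph{every} $d\ge 2$ (including perfect squares, where Artin's conjecture fails), at the modest cost of re-examining the proof of Proposition~\ref{proposition:manyexcepthyps}; the Zariski-density step there (multiplicative independence of $\a_2,\ldots,\a_n$ forcing $\Orbit_{\f_d}(P)\cap H_i$ to be dense in $H_i$) applies verbatim to your thinner arithmetic progression of exponents, so no new gap is introduced.
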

\begin{proof}
Proposition~\ref{proposition:manyexcepthyps} says that for every
prime~$\ell$ such that~$d$ is a primitive root modulo~$\ell$, we can
find a point~$P_\ell$ such that
\[
  \#\calL_{\f_d,P_\ell}^n\ge\ell-1.
\]
To prove the corollary, it suffices to note that there exist many
integers~$d$ with the property that they are primitive roots for
infinitely many primes~$\ell$. See for
example~\cite{MR762358,MR830627}, where it is proven that such~$d$
exist, and indeed are quite common. (Of course, Artin's conjecture
says that aside from the obvious exceptions, every~$d$ has this
property.)
\end{proof}

On the other hand, we are able to show by a detailed case-by-case
analysis that the conjecture holds for some choices of $P$ for which
the relation set is non-trivial, i.e., for which the orbit $\OO_\f(P)$
is \emph{not} Zariski dense. The following example demonstrates how
such results are proven, while also illustrating the case-by-case
analysis that makes it difficult to prove a general theorem.

\begin{prop}
\label{proposition:nonzerorel}
Let $\phi\colon\PP^3\to\PP^3$ be the $d^{\textup{th}}$-power
map~\eqref{eqn:dthpowermap}, and let
\[
  P = [\a_0,\a_1,\a_2,\a_3]\in\PP^3
  \quad\text{satisfy}\quad
  \a_0\a_1=\a_2\a_3,
\]
so~$P$ lies on the quadric surface
\[
  V = \bigl\{ [x_0,x_1,x_2,x_3] \in \PP^3 :  x_0x_1 = x_2x_3 \bigr\}.
\]
Assume further that~$\OO_\f(P)$ is Zariski dense in~$V$.  Then for all
$r\ge1$, the set of exceptional linear spaces~$\Linearspaces_{\f,P}^r$
is finite, and $\#\Linearspaces_{\f,P}^r$ may be bounded solely in
terms of~$n$ and~$d$.
\end{prop}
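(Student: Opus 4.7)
The plan is to adapt the proof of Theorem~\ref{theorem:finmanyH}, tracking carefully the effect of the non-trivial relation lattice. Using the Segre identification $V\cong\PP^1\times\PP^1$ via $([u:v],[w:z])\mapsto[uw:vz:uz:vw]$, the Zariski density of $\Orbit_\f(P)$ in $V$ translates to the statement that no monomial $\beta^a\gamma^b$ in $\beta:=u/v=\a_0/\a_3$ and $\gamma:=w/z=\a_0/\a_2$ is a root of unity unless $(a,b)=(0,0)$. A direct computation then shows $\Relations(P)=\ZZ\cdot\bfe$ with $\bfe=(1,1,-1,-1)$, and in particular no ratio $\a_i/\a_j$ is a root of unity. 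Lemmas~\ref{lemma:MtoHsurjective} and~\ref{lemma:MtoPinjective} therefore apply without change, and the $S$-unit bound of Theorem~\ref{theorem:Suniteqn} reduces the problem to controlling the fibres of the map $\bfF$ of~\eqref{eqn:usmtoTIPK} for every choice of partitions $\{\calI_p\}$, exactly as in the proof of Theorem~\ref{theorem:finmanyH}.

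Where the proof of Theorem~\ref{theorem:finmanyH} used $\Relations(P)=\{\bfzero\}$ to force the exponents in~\eqref{eqn:prodaiksietc} to vanish, the correct conclusion here is that the extended exponent vector $\tilde E\in\ZZ^4$, obtained by padding with zeros outside $\im(p)$, lies in $\ZZ\cdot\bfe$. For $r\in\{1,2\}$ any $p\in\calP_{r,3}$ is not surjective, so $\tilde E$ has a zero entry; since every nonzero multiple of $\bfe$ has all entries nonzero, $\tilde E=\bfzero$. Thus the exponents vanish, and the remainder of the argument of Theorem~\ref{theorem:finmanyH} (Lemmas~\ref{lemma:nonexceptinject} and~\ref{lemma:exceptsprrnk}) applies verbatim to give a bound depending only on $n$.

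The substantive case is $r=3$, where $\calP_{3,3}=\{\id\}$ and $\tilde E=E=c\bfe$ for some integer $c=c(T,\s,\t)$; writing $f(j):=d^{m_j}-d^{\widetilde m_j}$, this gives
\[
  f(\s(i))-f(\t(i))=c\,\e_i,\qquad 0\le i\le 3.
\]
If $\s^{-1}\t$ has a fixed point $i_0$, then $\e_{i_0}=\pm 1$ forces $c=0$, and Lemma~\ref{lemma:dadbeqdxdy} yields $m_{\s(i)}=\widetilde m_{\s(i)}$ for all $i$ exactly as in the proof of Theorem~\ref{theorem:finmanyH}. The only obstruction is a block $T\in\calI_{\id}$ whose elements are pairwise derangement-related; since derangement cliques in $\calS_4$ have size at most four (the rows of a $4\times 4$ Latin rectangle), there are finitely many partition shapes to consider.

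The hard part is the derangement case analysis. Two-element derangement blocks $T=\{\s,\t\}$ can be ruled out directly: the block-sum condition $u_{\s,\id}(\bfm)=\pm u_{\t,\id}(\bfm)$ together with the identities $\a_0/\a_1=\beta\gamma$, $\a_0/\a_2=\gamma$, $\a_0/\a_3=\beta$, $\a_2/\a_3=\beta\gamma^{-1}$ (coming from $\a_0\a_1=\a_2\a_3$) reduces, for each of the nine derangement types in $\calS_4$, to an equation $\beta^a\gamma^b=\pm 1$ with integer exponents $a,b$ built from the $d^{m_i}$; Zariski density then forces $a=b=0$, leaving the equation $1=\pm 1$, which is either trivially impossible or reduces to a relation on the $d^{m_i}$ amenable to Lemma~\ref{lemma:dadbeqdxdy}. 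Derangement blocks of size three and four are handled by combining the pairwise relations for every pair in $T$ with the block-sum condition, using that the collective system of $\binom{|T|}{2}\cdot(r+1)$ equations of the form $f(\s(i))-f(\t(i))=c_{\s\t}\e_i$ over-constrains the multiplicative structure via Zariski density. Finally, any residual partitions for which $\bfF$ still fails to be injective must, by a direct generalization of Lemma~\ref{lemma:exceptsprrnk}, force $A_\bfm$ to violate $\superrank(A_\bfm)=3$, so the corresponding $\bfm$ do not lie in $\calM_P$; this yields the desired bound on $\#\calL_{\f,P}^r$ in terms of $n$ and $d$.
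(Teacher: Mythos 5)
Your overall skeleton matches the paper's: identify $\Relations(P)=\ZZ\cdot(1,1,-1,-1)$, dispose of $r<n$ by noting that every $p\in\calP_{r,3}$ misses a coordinate so the padded exponent vector must be $\bfzero$, and reduce $r=n$ to analyzing the relation $v_{\s,\t}=c\,(1,1,-1,-1)$ together with the dichotomy ``some pair $\s,\t$ in some block is derangement-related'' versus ``every pair shares a value $\s(i)=\t(i)$,'' the latter collapsing to the exceptional-partition situation handled by Lemma~\ref{lemma:exceptsprrnk}. Two small inaccuracies first: when $c=0$, Lemma~\ref{lemma:dadbeqdxdy} gives $m_{\s(i)}=\widetilde m_{\s(i)}$ only for those $i$ with $\s(i)\ne\t(i)$ (the shared values give nothing — that is precisely why exceptional partitions exist), so your phrase ``for all $i$'' is wrong as stated, though your closing appeal to a Lemma~\ref{lemma:exceptsprrnk}-type argument does eventually cover this.

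The genuine gap is in the derangement case, which is the only place where this proposition requires anything beyond Theorem~\ref{theorem:finmanyH}. The paper's argument takes a \emph{single} pair $\s,\t$ lying in a common block with $\t^{-1}\s$ fixed-point-free — regardless of the size or structure of that block and without using the block-sum condition at all — and shows, case by case over the nine fixed-point-free elements of $\calS_4$ (four-cycles and the three products of two transpositions), that $v_{\s,\t}=c\,(1,1,-1,-1)$ plus identities like $\s(1)=\t(2)$ lets one eliminate $c$, apply Lemma~\ref{lemma:dadbeqdxdy}, and conclude $\bfm=\widetilde\bfm$. Your proposal replaces this with (i) a block-sum argument that rules out two-element derangement blocks (correct, but it proves a different and weaker statement — non-existence of such blocks rather than injectivity of $\bfF$ in their presence), and (ii) for larger blocks, the assertion that the system of pairwise relations ``over-constrains the multiplicative structure via Zariski density,'' which is not an argument. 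Moreover, organizing the cases by ``derangement cliques'' leaves uncovered the typical situation of a block of size $\ge 3$ that contains one derangement-related pair without being pairwise derangement-related; neither of your two cases applies to it, whereas the paper's single-pair analysis handles it immediately. To close the gap you should carry out the explicit computation, for each conjugacy type of fixed-point-free $\t^{-1}\s$, showing that $v_{\s,\t}\in\ZZ\cdot(1,1,-1,-1)$ forces $v_{\s,\t}=\bfzero$ or otherwise determines all four coordinates of $\bfm$.
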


\begin{proof}[Proof Sketch]
We note that in our notation, the assumption that
$\overline{\OO_\f(P)}=V$ is equivalent to the assumption that the
relation set $\Relations(P)$ has rank~$1$ and is generated by
$(1,1,-1,-1)$.  Since $P$ is not preperiodic and does not lie on any
coordinate hyperplane, much of the proof of
Theorem~\ref{theorem:finmanyH} carries over with no change.  To
complete the proof of Proposition~\ref{proposition:nonzerorel}, it
remains to show that for any partition $\calI_p$ that is not
exceptional, the map
\begin{equation}
  \bfF :  \calM_P
    \longrightarrow \prod_{p\in\calP_{r,n}} \prod_{T\in\calI_p} \PP^{\#T-1},
  \qquad
  \bfm
    \longmapsto \left(\bigl[u_\s(\bfm)\bigr]_{\s\in T}
        \right)_{\substack{p\in\calP_{r,n}\\ T\in\calI_p\\}},
\end{equation}
is injective.
\par
Let $\bfm, \widetilde{\bfm} \in\calM_P$ be such that $\bfF(\bfm) =
\bfF(\widetilde{\bfm})$.  This implies that 
for all $p\in\calP_{r,n}$, all $T\in\calI_p$, and all $\s,\t\in T$,
the vector
\begin{equation}
  \label{eq:relnimplication}
  v_{\s,\t} = \left({k_{\s(p^{-1}(i))}+\tilde k_{\t(p^{-1}(i))}-
    k_{\t(p^{-1}(i))}-\tilde k_{\s(p^{-1}(i))}}\right)_{0\le i\le 3}
  \quad\text{is in}\quad \Relations(P).
\end{equation}
(If $i$ is not in the image of $p$, we set the
$i^{\textup{th}}$-coordinate of~$v_{\s,\t}$ to be~$0$.)  Since
\[
  \Relations(P)\cap \bigl\{(e_0,e_1,e_2,e_3) : e_i=0\bigr\}
   = \{\bfzero\}
\]
for all $i$, if $r<n$ we can use the same argument as in the proof of
Theorem~\ref{theorem:finmanyH}.
\par
We now restrict to the case that $r = n$, so $\calP_{r,n} =
\{\textup{id}\}$.  First we assume that there exists a $T \in \calI$
and a $\s, \t \in T$ such that $\s^{-1}\t$ has no fixed points.  A
case-by-case analysis of the possibilities for $\s^{-1}\t$ shows
that~\eqref{eq:relnimplication} forces that $\bfm = \widetilde{\bfm}$.
We illustrate with two cases. 
\par
Suppose that~$\t^{-1}\s=(0123)$. Since~$\Relations(P)$ is generated
by~$(1,1,-1,-1)$, the fact that~$v_{\s,\t}$ is in~$\Relations(P)$ implies
that the second and third coordinates of~$v_{\s,\t}$ are negatives of
one another, i.e., 
\begin{equation}
  \label{eqn:ks1tkt1}
  k_{\s(1)}+\tilde k_{\t(1)} - \tilde k_{\s(1)}-k_{\t(1)}
  =
  -k_{\s(2)}-\tilde k_{\t(2)} + \tilde k_{\s(2)}+k_{\t(2)}.
\end{equation}
The assumption that $\t^{-1}\s=(0123)$ implies that $\s(1)=\t(2)$, which
allows us to simplify~\eqref{eqn:ks1tkt1} to 
\[
  \tilde k_{\t(1)} - k_{\t(1)}  =   \tilde k_{\s(2)} - k_{\s(2)}.
\]
Using~$k_i=d^{m_i}$ and Lemma~\ref{lemma:dadbeqdxdy} as usual, we
conclude that $k_{\s(2)}=\tilde k_{\s(2)}$ and $k_{\t(1)}=\tilde
k_{\t(1)}$. Substituting this into the relation equation and using the
fact that the first two coordinates of~$v_{\s,\t}$ are the same, we
find that $k_{\s(3)}=\tilde k_{\s(3)}$ and $k_{\s(1)}=\tilde
k_{\s(1)}$. This argument works, \emph{mutatis mutandis},
if $\t^{-1}\s$ is any four cycle, as well as when~$\t^{-1}\s$ is
either $(02)(13)$ or $(03)(12)$.
\par
Next suppose that $\t^{-1}\s=(01)(23)$. This choice of~$\t^{-1}\s$
means that
\[
  \s(0)=\t(1),\quad \s(1)=\t(0),\quad \s(2)=\t(3),\quad \s(3)=\t(2).
\]
Substituting these into the definition~\eqref{eqn:ks1tkt1}
of~$v_{\s,\t}$, we see that the vector~$v_{\s,\t}$ has the form
$(X,-X,Y,-Y)$. But~$\Relations(P)$ is generated by~$(1,1,-1,-1)$,
so~$v_{\s,\t}=\bfzero$, which implies as usual
that~$\bfk=\tilde{\bfk}$.
\par
We are left to consider the case that for all $T \in \calI$ and for
all $\s,\t \in T$, there is an $i$, depending on $\s, \t$, such that
$\s(i) = \t(i)$.  This implies that the $i^{\textup{th}}$-component of
$v_{\sigma,\tau}$ is $0$, and so $v_{\sigma,\tau}$ must be the zero
vector.  Hence if there is a $t$ such that $m_t \neq \widetilde{m}_t$,
then $\tau^{-1}(t) = \sigma^{-1}(t)$ for all $T\in \calI$ and for all
$\s,\t \in T$.  This says that $\calI$ is exceptional, which completes
the proof.
\end{proof}

\begin{remark}
The proof of Proposition~\ref{proposition:nonzerorel} gives a more
general result, namely that there is a uniform bound
for~$\#\calL^r_{\f,P}$ provided that for every choice~$H_1,\ldots,H_{n-r}$ 
of~$n-r$ coordinate hyperplanes, the relation set satisfies
\[
  \Relations(P)\cap H_1\cap\cdots\cap H_{n-r} = \{\bfzero\}.
\]
\end{remark}

%%%%%%%%%%%%%%%%%%%%%%%%%%%%%%%%%%%%%%%%%%%%%%%%%%%%%%%%%%%%%%%%%%%%%%%%%%%%
%% Bibliography
%%%%%%%%%%%%%%%%%%%%%%%%%%%%%%%%%%%%%%%%%%%%%%%%%%%%%%%%%%%%%%%%%%%%%%%%%%%%

%% \bibliographystyle{abbrv}
%% \bibliography{UniformBdForMordellLang}

\end{document}